
\documentclass[12pt,a4paper,reqno]{amsart}

\usepackage[latin1]{inputenc}
\usepackage{mathptmx}
\usepackage{amssymb}
\usepackage{hyperref}

\usepackage{pgf, tikz}
\usepackage{color}
\usepackage{algorithm}
\usepackage[noend]{algpseudocode}
\newcommand*\Let[2]{\State #1 $\gets$ #2}

\algrenewcommand\algorithmicrequire{\textbf{Input:}}
\algrenewcommand\algorithmicensure{\textbf{Return:}} 
\usepackage{url}

\textwidth=15cm \textheight=22cm 
\oddsidemargin=0.5cm \evensidemargin=0.5cm

\definecolor{light}{gray}{0.9}
\definecolor{medium}{gray}{0.8}

\newtheorem{theorem}{Theorem}
\newtheorem{lemma}[theorem]{Lemma}

\newtheorem{proposition}[theorem]{Proposition}

\theoremstyle{definition}
\newtheorem{remark}[theorem]{Remark}
\newtheorem{definition}[theorem]{Definition}

\numberwithin{equation}{section}

\def\ZZ{{\mathbb Z}}

\def\RR{{\mathbb R}}

\def\CC{{\mathbb C}}

\def\Hilb{\operatorname{Hilb}}

\def\gp{\operatorname{gp}}

\def\id{\operatorname{id}}

\def\conv{\operatorname{conv}}
\def\codim{\operatorname{codim}}

\def\hht{\operatorname{ht}}
\def\lcm{\operatorname{lcm}}
\def\vol{\operatorname{vol}}
\def\para{\operatorname{par}}
\def\Ker{\operatorname{Ker}}
\def\rec{\operatorname{rec}}
\def\lev{\operatorname{lev}}

\def\detsum{\operatorname{detsum}}
\def\Cl{\operatorname{Cl}}

\let\epsilon=\varepsilon

\overfullrule=5pt

\allowdisplaybreaks

\begin{document}
\title{Normaliz 2013--2016}
\author[W. Bruns]{Winfried Bruns}
\address{Winfried Bruns\\ Universit\"at Osnabr\"uck\\ FB Mathematik/Informatik\\ 49069 Osna\-br\"uck\\ Germany}
\email{wbruns@uos.de}
\author[R. Sieg]{Richard Sieg}
\address{Richard Sieg\\ Universit\"at Osnabr\"uck\\ FB Mathematik/Informatik\\ 49069 Osna\-br\"uck\\ Germany}
\email{risieg@uos.de}
\author[C. S\"oger]{Christof S\"oger}
\address{Christof S\"oger\\ Alter M\"hlenweg 1\\ 49504 Lotte\\ Germany}
\email{csoeger@uos.de}

\subjclass[2010]{52B20, 13F20, 14M25, 91B12}

\keywords{Hilbert basis, Hilbert series, rational cone, rational polyhedron, bottom decomposition,
class group, triangulation, linear diophantine system}

\thanks{The second author was partially supported by the German Research Council DFG-GRK~1916 and a doctoral fellowship of the German Academic Exchange Service. }
\begin{abstract}
In this article we describe mathematically relevant extensions to Normaliz that were added to it during the support by the DFG SPP ``Algorithmische und Experimentelle Methoden in Algebra, Geometrie und Zahlentheorie'': nonpointed cones, rational polyhedra, homogeneous systems of parameters, bottom decomposition, class groups and systems of module generators of integral closures.
\end{abstract}

\maketitle

\section{Introduction}

The software package Normaliz \cite{Nmz} has been developed by the algebra and discrete mathematics group at Osnabrück since 1998. It is a tool for the computation of lattice points in rational polyhedra.
\begin{figure}[hbt]
	\begin{center}
		\begin{tikzpicture}[scale=0.5]
		
		\filldraw[yellow] (5,5.2) -- (4,5.2) -- (4,5.2) -- (-1,3) -- (-2,-1) -- (2,-4.2) -- (4.5,-4.2) -- cycle;
		\draw (4,5.2) -- (-1,3) -- (-2,-1) -- (2,-4.2);
		
		\foreach \x in {-4,-2,0,2,4}
		\foreach \y in {-3,-1,1,3}
		{
			\filldraw[fill=black] (\x,\y)  circle (1.5pt);
		}
		\foreach \x in {-3,-1,1,3}
		\foreach \y in {-4,-2,0,2,4}
		{
			\filldraw[fill=black] (\x,\y)  circle (1.5pt);
		}
		\end{tikzpicture}
	\end{center}
	\caption{Lattice points in a polyhedron}\label{nonpfig}
\end{figure}
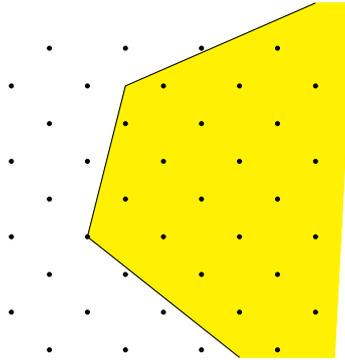
Meanwhile it has been cited about 130 times in the literature (see \cite{Nmz}) with applications to algebraic geometry, commutative algebra, polytope theory, integer programming, combinatorial topology, group theory, theoretical physics and other areas. There exist interfaces to the major computer algebra systems CoCoA \cite{CoCoA}, \cite{ABS}, GAP \cite{GAP-NmzInterface}, Macaulay2 \cite{M2} and Singular \cite{Sing} and to polymake \cite{pmake}, a comprehensive tool for the computation of polytopes. Normaliz is used by other packages, notably by Regina \cite{Reg}, a tool for the exploration of $3$-manifolds, and by SecDec \cite{SecDec} in the computation of multiscale integrals.

During the second half of the SPP 1489 ``Algorithmische und Experimentelle Methoden in Algebra, Geometrie und Zahlentheorie'' Normaliz was supported by the SPP. In this article we want to give an overview of those developments during the period of support that concern important mathematical aspects. For the mathematical background and unexplained terminology we refer the reader to Bruns and Gubeladze \cite{BG}.

The main algorithms of Normaliz have been documented in the papers by Bruns with Koch \cite{BK}, Ichim \cite{BI}, Hemmecke, Ichim, Köppe and Söger \cite{BHIKS}, Söger \cite{BS} and Ichim and Söger \cite{BIS}. See \cite{BIS} for the performance of Normaliz on its main tasks.

Let $A$ be a $e\times d$ matrix with integer entries, and $a\in \ZZ^e$. Then the set
\begin{equation}
P=\{x\in \RR^d; Ax\ge a\}\label{basic-p}
\end{equation}
is called a \emph{rational polyhedron}. Moreover, let $B$ be a $f\times d$ matrix of integers, $b\in\ZZ^f$, $C$ be a $g\times d$ matrix of integers and $c,m\in \ZZ^g$. Then 
\begin{equation}
L=\{x\in \ZZ^d: Bx=b,\ Cx\equiv c (m)\}\label{basic-l}
\end{equation}
is an \emph{affine sublattice} in $\RR^d$, and it is the task of Normaliz to ``compute''' the set $P\cap L$. So Normaliz can be considered as a tool for solving linear diophantine systems of inequalities, equations and congruences. Rational polyhedra and affine lattices can also be, and often are, described in terms of parametrizations or generators, and the conversion between the two descriptions for $P$ and $L$ separately is a basic task prior to the computation of $P\cap L$.

The main computation goals of Normaliz are
\begin{itemize}
\item[] \emph{Generation:} find a (finite) system of generators of $N=P\cap L$;
\item[] \emph{Enumeration:} Compute the Hilbert series
$$
H_N(t)=\sum_{x\in N} t^{\deg x}
$$
with respect to a grading on $\ZZ^d$.
\end{itemize}
Of course, \emph{Generation} must be explained, and \emph{Enumeration} must even be modified somewhat to make sense in the general case.

The core case for Normaliz computations is the homogeneous one, in which the vectors $a,b,c$ in \eqref{basic-p} and \eqref{basic-l} are $0$, under the additional assumption that $P$, which in the homogeneous case is a cone $C$, is pointed, i.e. it does not contain a nontrivial linear subspace.  The affine lattice $L$ is then a subgroup of $\ZZ^d$, and for \emph{Generation} Normaliz must compute a Hilbert basis of the monoid $M=C\cap L$, i.e., a minimal system of generators of the monoid $M$.

For a long time Normaliz could only handle homogeneous systems in the pointed case. These restrictions have been removed in two steps: version 2.11.0 (April 2014) introduced inhomogeneous systems and version 3.1.0 (February 2016) finally removed the condition that $P$ has a vertex.  These extensions will be discussed in Sections \ref{nonpoint} and \ref{inhom}, where also \emph{Generation} and \emph{Enumeration} will be made precise.

The Hilbert series is (the Laurent series expansion of) a rational function of type
$$
H_N(t)=\frac{Q(t)}{(1-t^{g_1})\cdots (1-t^{g_r})}
$$
with a Laurent polynomial $Q(t)\in \ZZ[t,t^{-1}]$. In the general case there is no canonical choice for the exponents $g_1,\dots,g_r$ in the denominator. One good possibility is to take them as the degrees of, in the language of commutative algebra, a homogeneous system of parameters (hsop). Such degrees can be found if one analyzes the face lattice of the recession cone of the system \eqref{basic-p} and \eqref{basic-l}; the cone of solutions of the associated homogeneous system. This approach will be developed in Section \ref{hsop}. The option to use an hsop was introduced in version 3.1.2 (September 2016).

The primal algorithm of Normaliz is based on triangulations. A critical magnitude for the algorithm is the sum of the determinants of the simplicial cones in the triangulation. Since version 3.0.0 (September 2015) this determinant sum can be optimized by using a bottom decomposition. In Section \ref{bottom} we explain how a bottom decomposition can be computed.

A normal affine monoid $M$ has a well-defined class group. By a theorem of Chouinard (see \cite[4.F]{BG}) it coincides with the class group of the monoid algebra $K[M]$ for an arbitrary field $K$. Since version 3.0.0 Normaliz computes the class group, as explained in Section \ref{class}.

The primal algorithm of Normaliz finds the Hilbert basis by first computing a system of generators of $M$ as a module (in a natural way) over an input (or precomputed) monoid $M_0$. Therefore it can be used to find a minimal system of module generators of $M$ over $M_0$. In more picturesque language these generators are called ``fundamental holes'' of $M_0$. See Kohl, Li, Rauh and Yoshida \cite{KLRY} for a package making use of this Normaliz feature.

There are several other extensions and options that have been introduced during the support of the Normaliz project by the SPP:
\begin{enumerate}
\item new input format (with backward compatibility),
\item standard sorting of vector lists in the output,
\item completely revised linear algebra with permanent overflow check,
\item automatic choice of integer type (64 bit or infinite precision),
\item computation of integer hulls as an option,
\item refinement of the triangulation to a disjoint decomposition,
\item subdivision of `` large'' simplicial cones by using SCIP \cite{Scip} or approximation methods (see Bruns, Sieg and Söger  \cite{ICMS16}),
\item a normality test that avoids the computation of the full Hilbert basis,
\item improvement of the Fourier-Motzkin algorithm in connection with pyramid decomposition (see \cite{BIS}),
\item revision of the dual algorithm,
\item various improvements in the algorithms that save memory and computation time,
\item improvements in NmzIntegrate (see Bruns and Söger \cite{BS}).
\end{enumerate} 
The file \texttt{CHANGELOG} in the Normaliz distribution gives an overview of the evolution.

\section{The Normaliz primal algorithm}\label{primal}

The heart of Normaliz are two algorithms. The \emph{primal algorithm} can be applied both to \emph{Generation} and \emph{Enumeration}. Among the two it is the considerably more complicated one. The \emph{dual algorithm} can only be used for \emph{Generation}. We refer the reader to Bruns and Ichim \cite{BI} for its description.

Since some details of the primal algorithm play a role in the following, we include a brief outline.
The primal algorithm starts from a pointed rational cone $C\subset\RR^d$ given by a system of generators $x_1,\dots,x_n$ and a sublattice $L\subset\ZZ^d$ that contains $x_1,\dots,x_n$. (Other types of input data are first transformed into this format.) The algorithm is composed as follows:
\begin{enumerate}
	\item Initial coordinate transformation to $E=L\cap (\RR x_1+\dots+\RR x_n)$;
	\item Fourier-Motzkin elimination computing the support hyperplanes of $C$;
	\item pyramid decomposition and computation of the lexicographic triangulation $\Delta$;
	\item evaluation of the simplicial cones in the triangulation:
	\begin{enumerate}
		\item enumeration of the set of lattice points $E_\sigma$ in the fundamental domain of a simplicial subcone $\sigma$,
		\item reduction of $E_\sigma$ to the Hilbert basis $\Hilb(\sigma)$,
		\item Stanley decomposition for the Hilbert series of $\sigma'\cap L$ where $\sigma'$ is a suitable translate of $\sigma$;
	\end{enumerate}
	\item Collection of the local data:
	\begin{enumerate}
		\item reduction of $\bigcup_{\sigma\in\Delta} \Hilb(\sigma)$ to $\Hilb(C\cap L)$,
		\item accumulation of the Hilbert series of the intersections $\sigma'\cap L$; 
	\end{enumerate}
	\item reverse coordinate transformation to $\ZZ^d$.
\end{enumerate}

The algorithm does not strictly follow this chronological order, but interleaves steps 2--5 in an intricate way to ensure low memory usage and efficient parallelization. The steps 2 and 5 are treated in \cite{BI}. Steps 3 and 4 are described in \cite{BIS}; the translates $\sigma'$ in 4c are chosen in such a way that $C\cap L$ is the disjoint union of their lattice points. 

In view of the initial and final coordinate transformations 1 and 6 it is no essential restriction to assume that $\dim C=d$ and $L=\ZZ^d$, as we will often do in the following.

\section{Nonpointed cones and nonpositive monoids}\label{nonpoint}

In this section we discuss only the homogeneous situation in which the polyhedron $P\subset\RR^d$ is a cone $C$ and the affine lattice $L$ is a subgroup of $\RR^d$. Since \cite{BG} contains an extensive treatment of the mathematical background, we content ourselves with a brief sketch and references to \cite{BG}.

The basic finiteness result in polyhedral convex geometry is the theorem of Minkowski-Weyl \cite[1.15]{BG}. It shows that one can equivalently describe cones by generators or by inequalities.

\begin{theorem}\label{MW}
The following conditions are equivalent for a subset $C$ of $\RR^d$:
\begin{enumerate}
\item there exist (integer) vectors $x_1,\dots,x_n$ such that $C=\RR_+x_1+\dots+\RR_+x_n$;
\item there exist linear forms (with integer coefficients) $\sigma_1,\dots,\sigma_s$ on $\RR^d$ such that $C=\{x\in\RR^d: \sigma_i(x)\ge 0,\ i=1,\dots,s\}$.
\end{enumerate}
\end{theorem}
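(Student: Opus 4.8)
The plan is to prove the two implications $(1)\Rightarrow(2)$ and $(2)\Rightarrow(1)$ and to exploit the symmetry between them via duality, so that only one direction really needs work.

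First I would set up the duality machinery. For a cone $C\subseteq\RR^d$ define the dual cone $C^*=\{\varphi\in(\RR^d)^*:\varphi(x)\ge 0\text{ for all }x\in C\}$. Observe that condition (2) for $C$ says precisely that $C=(C')^*$ where $C'$ is the cone generated by the linear forms $\sigma_1,\dots,\sigma_s$ (identifying $(\RR^d)^*$ with $\RR^d$ via the standard inner product); in other words, condition (2) for $C$ is condition (1) for $C'$ together with the biduality statement $C=C^{**}$. So the key technical fact I need is: if $C$ is finitely generated as in (1), then (a) $C^*$ is again finitely generated, and (b) $C^{**}=C$. Granting (a) and (b), the equivalence follows: $(1)\Rightarrow(2)$ because $C=C^{**}=(C^*)^*$ and $C^*$ has finitely many generators $\sigma_1,\dots,\sigma_s$ by (a), which are the required inequalities; and $(2)\Rightarrow(1)$ because if $C=\{x:\sigma_i(x)\ge 0\}=D^*$ with $D=\RR_+\sigma_1+\dots+\RR_+\sigma_s$, then $D$ is finitely generated, hence by (a) $D^*=C$ is finitely generated.

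The main work is therefore step (a), for which I would use Fourier--Motzkin elimination — this is also the step I expect to be the main obstacle, since it is where the real combinatorics happens. Given $C=\RR_+x_1+\dots+\RR_+x_n$, a linear form $\varphi$ lies in $C^*$ iff $\varphi(x_j)\ge 0$ for $j=1,\dots,n$, i.e.\ $C^*$ is cut out by finitely many homogeneous linear inequalities in the coordinates of $\varphi$. So it suffices to show that any cone defined by finitely many homogeneous inequalities is finitely generated. I would prove this by induction on the number of variables: eliminate one coordinate by forming, for each pair of inequalities of opposite sign in that coordinate, their positive combination that cancels it (and keeping the inequalities that do not involve it); the projection of the solution set is the solution set of the new, smaller system, which is finitely generated by induction, and one lifts generators back by solving for the eliminated coordinate. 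Rationality/integrality of the generators is preserved throughout because all operations are rational. For biduality (b), the inclusion $C\subseteq C^{**}$ is immediate; the reverse inclusion $C^{**}\subseteq C$ is the separation statement: if $y\notin C$ with $C$ closed and convex (a finitely generated cone is closed, which itself needs a small argument, e.g.\ via Carath\'eodory or via the Fourier--Motzkin description just obtained), there is a linear form $\varphi$ with $\varphi\ge 0$ on $C$ and $\varphi(y)<0$, so $\varphi\in C^*$ witnesses $y\notin C^{**}$.

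An alternative, more self-contained route that avoids invoking a separation theorem is to do everything with Fourier--Motzkin: prove directly that (1) implies (2) by writing the condition ``$x\in\RR_+x_1+\dots+\RR_+x_n$'' as the projection onto the $x$-coordinates of the polyhedral cone $\{(x,\lambda)\in\RR^{d+n}:x=\sum\lambda_j x_j,\ \lambda_j\ge 0\}$, and eliminating the $\lambda_j$ one at a time to obtain a description of $C$ by finitely many inequalities in $x$ alone; then (2) implies (1) by the dual elimination argument sketched above applied to $C^*$. This keeps the proof inside elementary linear algebra over $\QQ$ and makes the integrality of the data transparent, at the cost of a somewhat more bookkeeping-heavy induction. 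Either way, the crux is the Fourier--Motzkin elimination lemma, and once that is in place the theorem is a short formal deduction.
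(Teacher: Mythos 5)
The paper does not actually prove this theorem; it quotes it as the Minkowski--Weyl theorem from \cite[1.15]{BG}, so there is no in-house argument to compare against. Your overall architecture --- Fourier--Motzkin elimination plus cone duality and biduality --- is the standard route and is sound in outline. In particular, proving $(1)\Rightarrow(2)$ by eliminating the $\lambda_j$ from $\{(x,\lambda):x=\sum_j\lambda_jx_j,\ \lambda_j\ge0\}$ is exactly right, it visibly preserves rationality (hence integrality after clearing denominators), and it also yields biduality $E^{**}=E$ for finitely generated cones with no separation theorem: if $y\notin E=\{x:\tau_j(x)\ge0\}$, then some $\tau_j\in E^*$ has $\tau_j(y)<0$.

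The one genuine gap is in your proof of fact (a), i.e.\ of the implication ``finitely constrained $\Rightarrow$ finitely generated'': the step ``one lifts generators back by solving for the eliminated coordinate'' fails as stated. After solving the inequalities for the last coordinate one has $C=\{(\bar x,t):\bar x\in\pi(C),\ L(\bar x)\le t\le U(\bar x)\}$, where $L=\max_i\ell_i$ is piecewise linear and convex and $U=\min_j u_j$ is piecewise linear and concave. If $\bar y_1,\dots,\bar y_m$ generate $\pi(C)$ and you lift each $\bar y_k$ to the endpoints of its fiber, a nonnegative combination $\bar x=\sum_k\mu_k\bar y_k$ only reaches $t$-values in $\bigl[\sum_k\mu_kL(\bar y_k),\ \sum_k\mu_kU(\bar y_k)\bigr]$, which by convexity/concavity is in general a \emph{proper} subinterval of $[L(\bar x),U(\bar x)]$; points near the lower envelope are missed. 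To repair this you must first subdivide $\pi(C)$ into the finitely many subcones on which $L$ and $U$ are each a single linear form (these are again finitely constrained, hence finitely generated by induction) and lift the generators of those subcones. Far simpler is to drop this direction entirely and follow your own alternative route consistently: prove only Weyl's direction $(1)\Rightarrow(2)$ by elimination, and deduce $(2)\Rightarrow(1)$ formally: $C=D^*$ with $D=\RR_+\sigma_1+\dots+\RR_+\sigma_s$ finitely generated; by Weyl, $D=\{y:\langle y,z_k\rangle\ge0,\ k=1,\dots,m\}=E^*$ with $E=\RR_+z_1+\dots+\RR_+z_m$; hence $C=D^*=E^{**}=E$ by biduality for the finitely generated cone $E$. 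With that reorganization every step is justified and no lifting argument is needed.
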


With the additional requirement of integrality in the theorem, $C$ is called a \emph{rational cone}. If $\dim \RR C=d$ and  the number of linear forms is chosen to be minimal, the $\sigma_i$ in the theorem are uniquely determined up to positive scalars, and they are even unique if we additionally require that the coefficients are coprime integers. In this case we call $\sigma_1,\dots,\sigma_s$ the \emph{support forms} of $C$. The map $\sigma:\RR^d\to\RR^s, \sigma(x)=(\sigma_1(x),\ldots,\sigma_s(x)),$ is called the \emph{standard map} of $C$. Clearly, $\sigma$ maps $\ZZ^d$ to $\ZZ^s$ in the rational case.

The conversion from generators to inequalities in the description of cones is usually called \emph{convex hull computation} and the converse transformation is \emph{vertex enumeration}. These two transformations are two sides of the same coin and algorithmically completely identical since they amount to the dualization of a cone. While this is not the main task of Normaliz, it often outperforms dedicated packages. See the recent benchmarks by Assarf et al.\ \cite{pmk-bench} and Köppe and Zhou \cite{KZ}.

In view of the remarks in Section \ref{primal} we can assume that $\dim C=d$ and that the subgroup $L\subset \ZZ^d$ is $\ZZ^d$ itself. Thus the task is to compute the monoid $M=C\cap \ZZ^d$. The basic finiteness result for such monoids is \emph{Gordan's lemma} \cite[2.9]{BG}:

\begin{theorem}
There exist $x_1,\dots,x_n\in \RR^d$ such that $M=\ZZ_+x_1+\dots+\ZZ_+x_n$.
\end{theorem}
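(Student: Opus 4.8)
The plan is to derive Gordan's lemma from the Minkowski--Weyl theorem (Theorem \ref{MW}) together with a pigeonhole argument based on a bounded fundamental region. First I would apply Theorem \ref{MW}(1) to write $C=\RR_+y_1+\dots+\RR_+y_m$; since $C$ is rational we may clear denominators and assume $y_1,\dots,y_m\in\ZZ^d$. Then consider the closed parallelepiped
$$\Pi=\Big\{\,\sum_{i=1}^m\lambda_i y_i : 0\le\lambda_i\le 1\,\Big\},$$
which is the image of the compact cube $[0,1]^m$ under a linear map and hence compact, in particular bounded. Therefore $B:=\Pi\cap\ZZ^d$ is a \emph{finite} set, and it contains $y_1,\dots,y_m$ (take $\lambda_i=1$ and the other coefficients $0$).

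The key step is to show that $B$ generates $M=C\cap\ZZ^d$ as a monoid. Take any $z\in M$ and write $z=\sum_{i=1}^m\lambda_i y_i$ with $\lambda_i\ge 0$. Splitting each coefficient into integer and fractional part, $\lambda_i=\lfloor\lambda_i\rfloor+\{\lambda_i\}$, gives
$$z=\sum_{i=1}^m\lfloor\lambda_i\rfloor\, y_i+\sum_{i=1}^m\{\lambda_i\}\, y_i .$$
The first summand lies in $\ZZ_+y_1+\dots+\ZZ_+y_m\subset\ZZ^d$, so the second summand $w:=z-\sum_{i}\lfloor\lambda_i\rfloor y_i$ lies in $\ZZ^d$ as well; since its coefficients $\{\lambda_i\}$ lie in $[0,1]$, we have $w\in\Pi$, hence $w\in B$. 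Thus $z=w+\sum_{i}\lfloor\lambda_i\rfloor y_i$ expresses $z$ as a $\ZZ_+$-combination of elements of $B$, using $y_i\in B$. Taking $x_1,\dots,x_n$ to be the (finitely many) elements of $B$ then yields $M=\ZZ_+x_1+\dots+\ZZ_+x_n$.

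I do not expect a genuine obstacle here: the proof is entirely elementary once Theorem \ref{MW} is available. The only points needing a little care are the reduction to integral generators $y_i$ — which uses precisely the integrality clause of Theorem \ref{MW} for rational cones — and the finiteness of $B$, which rests on the compactness of $\Pi$. Note that the standing assumption $\dim C=d$ plays no role in this argument; it is convenient for the surrounding algorithmic discussion but Gordan's lemma holds for any rational cone.
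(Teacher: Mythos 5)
Your proof is correct and complete; the paper itself gives no argument for this statement but simply cites \cite[2.9]{BG}, and what you have written is exactly the standard proof of Gordan's lemma found there (integral generators via Minkowski--Weyl, the compact parallelepiped $\Pi$, and the integer/fractional splitting of coefficients). The only step you leave tacit is the reverse inclusion $\ZZ_+B\subseteq M$, which holds because $\Pi\subseteq C$ gives $B\subseteq C\cap\ZZ^d=M$ --- immediate, but worth one line. Your closing remark is also right: the argument needs neither $\dim C=d$ nor pointedness.
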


At this point it is useful to borrow some terminology from number theory. We call $U(M)=\{x\in M: -x\in M\}$ the \emph{unit group} of $M$. Clearly, $U(M)=\{x \in M: \sigma(x)=0\}$. The unit group is the maximal subgroup of $\ZZ^d$ that is contained in $M$. One calls $M$ \emph{positive} if $U(M)=0$. Similarly, the \emph{maximal linear subspace} of $C$ is $U(C)=\Ker\sigma$. It is not hard to see that the positivity of $M$ is equivalent to the \emph{pointedness} of $C$: one has $U(C)=\RR U(M)$, and therefore $U(M)=0$ if and only if $U(C)=0$.

An element $x\in M\setminus U(M)$ is called \emph{irreducible} if a decomposition $x=y+z$ with $y,z\in M$ is only possible with $y\in U(M)$ or $z\in U(M)$. The role of the irreducible elements in the generation of $M$ is illuminated by the following theorem \cite[2.14 and 2.26]{BG}.

\begin{theorem} Let $M=C\cap\ZZ^d$. Then the following hold:
\begin{enumerate}
\item every element $x$ of $M$ can be written in the form $x=u+y_1+\dots +y_m$ where $u$ is a unit and $y_1,\dots,y_m$ are irreducible;
\item up to differences by units, there exist only finitely many irreducibles in $M$;
\item let $H\subset M$; then the following are equivalent:
\begin{enumerate}
\item $M=U(M)+\ZZ_+H$ and $H$ is minimal with this property;
\item $H$ contains exactly one element of each residue class of irreducibles modulo $U(M)$.
\end{enumerate}
\item $M\cong U(M)\oplus \sigma(M)$. 
\end{enumerate}
\end{theorem}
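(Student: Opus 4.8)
The plan is to deduce all four assertions from the isomorphism in (4), which I would prove first, since it reduces everything to a positive monoid. By the definition of support forms, $C=\{x\in\RR^d:\sigma_i(x)\ge 0,\ i=1,\dots,s\}$, and $U(M)=\Ker\sigma\cap\ZZ^d$ is exactly the kernel of the group homomorphism $\sigma|_{\ZZ^d}\colon\ZZ^d\to\ZZ^s$ (if $\sigma(v)=0$ then all $\sigma_i(v)=0\ge 0$, so $v\in C\cap\ZZ^d=M$ together with $-v$). Its image $\sigma(\ZZ^d)$ is a subgroup of $\ZZ^s$, hence free, so this homomorphism splits: $\ZZ^d=U(M)\oplus W$ with $\sigma$ mapping $W$ isomorphically onto $\sigma(\ZZ^d)$. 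If $x\in M$ and $x=u+w$ with $u\in U(M)$, $w\in W$, then $\sigma(w)=\sigma(x)\ge 0$, hence $w\in C$ and so $w\in M\cap W$; conversely $U(M)+(M\cap W)\subseteq M$. Thus $M=U(M)\oplus(M\cap W)$, and since $\sigma(M\cap W)=\sigma(M)$, the restriction of $\sigma$ is an isomorphism $M\cap W\to\sigma(M)$; this is (4). Moreover $\overline M:=\sigma(M)\cong M\cap W=C\cap W$ is positive (because $U(M)\cap W=0$) and, being the intersection of a rational cone with a lattice, is a finitely generated positive affine monoid by Gordan's lemma.

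The second ingredient I would set up is the transfer of irreducibility along $\sigma$: for $x\in M\setminus U(M)$, $x$ is irreducible in $M$ if and only if $\sigma(x)$ is irreducible in $\overline M$, and $x\equiv x'\pmod{U(M)}$ if and only if $\sigma(x)=\sigma(x')$. The congruence statement is clear, since $\sigma(x-x')=0$ forces $x-x'\in U(M)$. For the irreducibility statement, if $\sigma(x)=\sigma(y)+\sigma(z)$ with $y,z\in M$ and $\sigma(y),\sigma(z)\ne 0$, then $u:=x-y-z\in U(M)$ and $x=y+(z+u)$ is a decomposition of $x$ into two non-units of $M$; the converse is immediate. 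Hence $\sigma$ induces a bijection between the $U(M)$-cosets of irreducibles of $M$ and the irreducibles of $\overline M$. Since $\overline M$ is a positive affine monoid, its irreducibles are finite in number: each lies in every generating set — and Gordan's lemma supplies a finite one — because an irreducible of a positive monoid cannot be written as a sum of two nonzero elements. This proves (2).

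Assertion (1) I would prove directly on $M$: the additive function $\ell(x)=\sigma_1(x)+\dots+\sigma_s(x)$ is nonnegative and integer-valued on $M$ and vanishes exactly on $U(M)$. Inducting on $\ell(x)$: if $x\in U(M)$ take $u=x$ and $m=0$; if $x$ is irreducible take $u=0$ and $m=1$; otherwise $x=y+z$ with $y,z\notin U(M)$, so $\ell(y),\ell(z)\ge 1$ and both are $<\ell(x)$, and the inductive hypothesis applies to $y$ and $z$. For (3), the implication (b) $\Rightarrow$ (a) follows from (1) by replacing each irreducible summand by its representative in $H$; minimality holds because for any $h\in H$ one has $h\notin U(M)+\ZZ_+(H\setminus\{h\})$ — an irreducible is not a sum of a unit and two or more non-units, and distinct elements of $H$ lie in distinct cosets. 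For (a) $\Rightarrow$ (b): $H$ contains no unit (a unit could be dropped, contradicting minimality); expressing an arbitrary irreducible through $H$ and using the same non-decomposability shows $H$ meets every irreducible coset; a similar reduction shows each $h\in H$ is itself irreducible (otherwise it would be redundant); and two elements of $H$ in one coset would make one of them redundant — so $H$ is a complete, irredundant system of coset representatives.

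The step I expect to demand the most care is (2): one must know that $\overline M=\sigma(M)$ is \emph{finitely generated} and positive, and that irreducibility passes in both directions between $M$ and $\overline M$ along $\sigma$. Once the splitting of (4) and this correspondence are in hand, (1), (2) and (3) are precisely the standard statements about positive affine monoids, transported to $M$ via $M\cong U(M)\oplus\sigma(M)$.
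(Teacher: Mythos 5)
Your proposal is correct, and it follows exactly the route the paper intends: the paper itself gives no proof but cites \cite[2.14 and 2.26]{BG}, and its surrounding discussion (splitting \emph{Generation} into computing $U(M)=\Ker(\sigma|\ZZ^d)$ and the Hilbert basis of the positive monoid $\sigma(M)$) is precisely your reduction via the splitting $\ZZ^d=U(M)\oplus W$ and the transfer of irreducibility along $\sigma$. All the individual steps check out, including the induction on $\ell(x)=\sigma_1(x)+\dots+\sigma_s(x)$ for (1) and the case analysis on $\sum a_{h'}$ for (3).
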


If $H$ satisfies the equivalent conditions in statement 3 we call it a \emph{Hilbert basis} of $M$. Clearly, together with a basis of the free abelian group $U(M)$ the Hilbert basis gives a minimal finite description of $M$. Statement 4 shows that $U(M)$ and $\sigma(M)$ are independent of each other. Moreover, the submonoid of $M$ generated by $H$ is isomorphic to $\sigma(M)$. 

Note that $H$ is uniquely determined if $M$ is positive, so that we can denote it by $\Hilb(M)$.  In the general case $H$ is a Hilbert basis of $M$ if and only if $\sigma(H)=\Hilb(\sigma(M))$. Therefore \emph{Generation} can be split into two subtasks: (i) find $U(M)$, the kernel of the $\ZZ$-linear map $\sigma|\ZZ^d$ and (ii) find the Hilbert basis of the positive monoid $\sigma(M)$. The first task is a matter of solving a homogeneous diophantine system of linear equations, and the second is what Normaliz has done from its very beginnings. 

The theory above can be developed for arbitrary affine monoids; see \cite[Ch. 2]{BG}. However, the direct sum decomposition $M\cong U(M)\oplus\sigma(M)$ is not always possible.

What we have described for Hilbert bases, applies similarly to extreme rays of cones. These are only defined modulo $U(C)$ in the general case.

Normaliz' dual algorithm for the computation of Hilbert bases effectively does all its computations in in the pointed cone $\sigma(C)$; see \cite{BI}. Nevertheless, versions before 2.11.0 did not output the results if the cone was not pointed. 

The primal algorithm could have been modified for Hilbert basis computations of nonpointed cones, but we do not see a way for the computation of Hilbert series in the nonpointed case. Moreover, the passage to the quotient modulo the maximal linear subspace reduces the dimension and therefore speeds up the computation. Let us look at a simple example (Figure \ref{nonpfig}). The output shows:
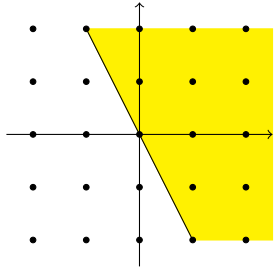
\begin{figure}[hbt]
\begin{center}
	\begin{tikzpicture}[scale=0.7]
	
	\filldraw[yellow] (1,-2) -- (-1,2) -- (2.5,2) -- (2.5,-2) -- cycle;
	
	\foreach \x in {-2,...,2}
	\foreach \y in {-2,...,2}
	{
		\filldraw[fill=black] (\x,\y)  circle (1.5pt);
	}
	\draw[->] (-2.5,0) -- (2.5,0);
	\draw[->] (0,-2.5) -- (0,2.5);
	\draw (1,-2) -- (-1,2);
	\end{tikzpicture}
\end{center}
\caption{A nonpointed cone}\label{nonpfig}
\end{figure}
\begin{quotation}
\begin{verbatim}
1 Hilbert basis elements of degree 1:
0 1

0 further Hilbert basis elements of higher degree:

1 extreme rays:
0 1

1 basis elements of maximal subspace:
1 -2
\end{verbatim}
\end{quotation}
	
Since in the vast majority of cases Normaliz is applied to positive monoids, Normaliz does not (always) try to compute $U(M)$ beforehand -- very likely it is $0$. The computation of $\sigma$ requires the computation of the support hyperplanes of $C$.  Eventually these will be known, but their computation is inevitably intertwined with the computation of the triangulation, and would essentially have to be done twice. Therefore Normaliz takes the following ``bold'' approach in the primal algorithm:
\begin{enumerate}
	\item Start the computation and proceed under the assumption that $C$ is pointed.
	\item As soon as the support hyperplanes have been computed, decide positivity.
	\item If it should fail, throw an exception, perform the coordinate transformation to the pointed quotient, and restart the computation.
\end{enumerate}

After \emph{Generation} let us discuss \emph{Enumeration}. A linear form $\deg:\ZZ^d\to\ZZ$ is called a \emph{grading} on $M$ if $\deg x\ge 0$ for all $x\in M$ and $\deg x >0$ for $x\in M\setminus U(M)$. The \emph{Hilbert series} of $M$ with respect to $\deg$ is the formal power series
$$
H_M(t)=\sum_{x\in M} t^{\deg x}.
$$
If $M$ is positive there exist only finitely many elements in each degree, and the definition of $H_M(t)$ makes sense. This is not the case if $U(M)\neq 0$ -- there exist already infinitely many elements of degree $0$. Hence, if $M$ is not positive, the only Hilbert series that we can associate to it, is that of $\sigma(M)$. In fact, since $\deg(x)=0$ for $x\in U(M)$, $\deg$ induces a grading on $\sigma(M)$: if $\sigma(x)=\sigma(y)$, then $x-y\in U(M)$, and so $\deg x=\deg y$. Therefore Normaliz (always) computes $H_{\sigma(M)}(t)$, and the invariants that depend on the Hilbert series are also computed for $\sigma(M)$.

\section{Inhomogeneous systems}\label{inhom}

In algebraic geometry one passes from an affine variety to a projective one by \emph{homogenization}, and the same technique is used in discrete convex geometry to reduce algorithms for polyhedra to algorithms for cones. Let $P\subset \RR^d$ be an arbitrary polyhedron. Then the \emph{cone over $P$} is the \emph{closed} set
$$
C(P)=\overline{\RR_+(P\times \{1\})} \subset \RR^{d+1}.
$$
This amounts to passing from an inhomogeneous system to a homogeneous one by introducing a homogenizing variable, the $(d+1)$th coordinate. Setting the homogenizing variable equal to $1$, we get the inhomogeneous system back. In fact, it is not hard to see that one obtains a system of inequalities for $C(P)$ by homogenizing such a system for $P$ and adding the inequality $x_{d+1}\ge 0$.

If we set the homogenizing variable equal to $0$ we obtain the \emph{associated homogeneous system}, and its solution set is called the \emph{recession cone} in our case:
$$
\rec(P)=\{x\in\RR^d: (x,0)\in C(P) \}.
$$

It is useful to introduce the \emph{level} of a point $x\in \RR^{d+1}$,
$$
\lev(x)=x_{d+1}.
$$ 

By (de)homogenizing the Minkowski-Weyl theorem \ref{MW} one arrives at \emph{Motzkin's theorem}; see \cite[1.27]{BG}:

\begin{theorem}\label{Motz}
Let $P$ be a nonempty subset of $\RR^d$. Then the following are equivalent:
\begin{enumerate}
\item $P$ is a polyhedron;
\item There exist a nonempty polytope $Q$ and a cone $C$ such that $P=Q+C$.
\end{enumerate}
\end{theorem}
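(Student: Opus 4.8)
The plan is to derive Motzkin's theorem from the Minkowski--Weyl theorem \ref{MW} by the homogenization trick described just above the statement, applied to the cone $C(P) \subset \RR^{d+1}$. The implication $(2) \Rightarrow (1)$ is the easy direction: if $P = Q + C$ with $Q = \conv(v_1,\dots,v_k)$ a nonempty polytope and $C = \RR_+ w_1 + \dots + \RR_+ w_\ell$ a cone, then $C(P)$ is generated as a cone by the finitely many vectors $(v_1,1),\dots,(v_k,1),(w_1,0),\dots,(w_\ell,0)$, hence by Minkowski--Weyl \ref{MW} it is cut out by finitely many linear inequalities; intersecting with the hyperplane $\lev(x) = 1$ and dehomogenizing gives a finite inequality description of $P$, so $P$ is a polyhedron.

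For the main implication $(1) \Rightarrow (2)$, first I would check that $C(P)$ is a rational (or at least finitely generated) cone. Starting from a description $P = \{x : Ax \ge a\}$, one verifies that
$$
C(P) = \{(x,t) \in \RR^{d+1} : Ax - ta \ge 0,\ t \ge 0\},
$$
which requires the small topological argument that the closure in the definition of $C(P)$ is exactly this set --- the only subtlety is at $t = 0$, where $\{x : (x,0) \in C(P)\} = \rec(P)$; points with $t > 0$ are already in $\RR_+(P \times \{1\})$ without taking closure. Once $C(P)$ is presented by inequalities, Minkowski--Weyl \ref{MW} produces generators $y_1,\dots,y_m$ of $C(P)$. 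Since $\lev$ is nonnegative on $C(P)$ and $P \ne \emptyset$ guarantees $C(P)$ is not contained in $\{\lev = 0\}$, I can rescale each generator so that $\lev(y_j) \in \{0,1\}$: those with level $1$ I write as $(v_j,1)$, those with level $0$ as $(w_j,0)$. Then $P = \{x : (x,1) \in C(P)\} = \conv(v_j) + \cone(w_j)$, which is the desired decomposition $Q + C$ with $Q$ a nonempty polytope (nonempty because at least one level-$1$ generator must occur) and $C = \rec(P)$ a cone.

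The hard part is really just the careful handling of the closure operation and the level-$0$ boundary of $C(P)$: one must confirm that taking the topological closure of $\RR_+(P \times \{1\})$ adds precisely the recession directions $(w,0)$ and nothing pathological, and that this closed set coincides with the polyhedral cone $\{Ax \ge ta,\ t \ge 0\}$. This is standard but is the one place where ``$P$ is a polyhedron'' is genuinely used rather than just ``$P$ is convex and closed''. Everything else is bookkeeping: rescaling generators by positive scalars (legitimate since cone generators are only defined up to $\RR_+$), separating them by level, and dehomogenizing. I would present the argument in the order: (a) reduce to showing $C(P)$ is finitely generated/finitely constrained, (b) invoke \ref{MW}, (c) normalize and split the generators by level, (d) dehomogenize to read off $Q$ and $C$, and (e) dispatch the converse by the symmetric homogenization argument.
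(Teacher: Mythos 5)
Your proof is correct and follows exactly the route the paper indicates: the paper gives no independent argument for Theorem~\ref{Motz}, stating only that it is obtained ``by (de)homogenizing the Minkowski--Weyl theorem'' and citing \cite[1.27]{BG}, which is precisely your strategy of passing to $C(P)$, applying Theorem~\ref{MW}, normalizing generators by level, and dehomogenizing. Your attention to the closure at level $0$ (where $P\neq\emptyset$ is genuinely needed) is the right place to be careful, and the details you supply are sound.
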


A polytope is a bounded polyhedron; a special case of Theorem \ref{Motz} is Minkow\-ski's theorem: $P$ is a polytope if and only if $P$ is the convex hull of finitely many points. 

For the cone $C$ in the theorem one has no choice: $C=\rec(P)$. The polytope $P$ is unique only if it is chosen minimal and $\rec(P)$ is pointed. In this case it must be the convex hull of the vertices of $P$. In the general case the vertices, like the extreme rays of cones, are only defined modulo the maximal linear subspace $U(\rec(P))$. 

One can interpret Theorem \ref{Motz} as saying that polyhedra are finitely generated: $Q$ is the convex hull of finitely many points, and the cone $C$ is finitely generated. Finite generation holds also for lattice points, as we will see now.

In the same way as polyhedra, one homogenizes an affine lattice: from $L\subset \ZZ^d$ one passes to the subgroup $\overline L$ of $\ZZ^{d+1}$ generated by $L\times \{1\}$. Normaliz goes this way, and then reduces the situation to the case $\overline L=\ZZ^{d+1}$ by preliminary coordinate transformations. For simplicity we will therefore assume that $\overline L=\ZZ^{d+1}$.

We want to compute the set $N=P\cap \ZZ^d$. The homogenization of $N$ is the monoid $M=C(P)\cap\ZZ^{d+1}$.  By analogy with $\rec(P)$ we define the \emph{recession monoid}
$$
\rec(N)=\{x\in \ZZ^d: (x,0)\in M\}.
$$

\begin{theorem}\label{Modfin}
Suppose that $N\neq \emptyset$.
\begin{enumerate}
\item Then there exist finitely many lattice points $y_1,\dots,y_m\in N$ such that 
$$
N=\bigcup_{i=1}^m x_i+\rec(N).
$$
\item The number $m$ is minimal if and only if there exists a Hilbert basis $H$ of $M$ such that
$$
\{y_1,\dots,y_m \}=\{y\in\ZZ^d: (y,1)\in H \}
$$
\item If $H$ is a Hilbert basis of M, then $\{x\in \ZZ^d: (x,0)\in H \}$ is a Hilbert basis of $\rec(N)$.
\end{enumerate}
\end{theorem}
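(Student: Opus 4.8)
The plan is to translate everything into the homogenization $M=C(P)\cap\ZZ^{d+1}$ and to organise $M$ by the \emph{level} function. Since $\lev\ge 0$ on $C(P)$, the pieces $M_k=\{x\in M:\lev(x)=k\}$ satisfy $M=\bigcup_{k\ge 0}M_k$ and $M_j+M_k\subseteq M_{j+k}$, and $U(M)\subseteq M_0$ because a unit has level $0$. The maps $x\mapsto(x,0)$ and $x\mapsto(x,1)$ identify $\rec(N)$ with the submonoid $M_0$ and $N$ with the set $M_1$, in such a way that the translation action of $\rec(N)$ on $N$ is carried to that of $M_0$ on $M_1$; so a decomposition $N=\bigcup_i y_i+\rec(N)$ is nothing but a set of $M_0$-module generators $(y_i,1)$ of $M_1$, with cardinality and minimality preserved. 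Under the running assumption $\overline L=\ZZ^{d+1}$ the monoid $M=C(P)\cap\ZZ^{d+1}$ is of the form to which Gordan's lemma and the Hilbert-basis theorem of Section~\ref{nonpoint} apply, so I fix a Hilbert basis $H$ of $M$ and split it by level, $H=H_0\cup H_1\cup H_{\ge 2}$; since $\lev$ is constant on residue classes modulo $U(M)$, the cardinalities $|H_0|$ and $|H_1|$ depend only on $M$.

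For part~(1) I would take $y\in N$, write $(y,1)=u+\sum_{h\in H}n_h h$ with $u\in U(M)$, $n_h\in\ZZ_+$, and apply $\lev$: the right side becomes a sum of nonnegative integers equal to $1$, so exactly one $h^\ast=(y^\ast,1)\in H_1$ occurs, with coefficient $1$, and every other contributing $h$ lies in $H_0\subseteq M_0$. Hence $(y,1)=(y^\ast,1)+(r,0)$ with $r\in\rec(N)$, and conversely $(y^\ast,1)\in M$ forces $y^\ast\in N$ while $N+\rec(N)\subseteq N$ (because $M$ is closed under addition); this gives the finite decomposition $N=\bigcup_{(y^\ast,1)\in H_1}y^\ast+\rec(N)$, of cardinality $|H_1|$. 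Part~(3) is the same bookkeeping one level lower: one checks $U(M_0)=U(M)$ and that a level-$0$ element is irreducible in $M_0$ exactly when it is irreducible in $M$ (any $M$-decomposition of a level-$0$ element has both summands of level $0$), so $H_0$, the set of representatives in $H$ of the residue classes of level-$0$ irreducibles, is precisely a Hilbert basis of $M_0\cong\rec(N)$.

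I expect the real work to be the minimality claim in~(2), because it couples monoid-irreducibility in $M$ with the coset structure of $N$ over $\rec(N)$. First I would show that distinct elements of $H_1$ give distinct cosets of $\rec(N)$: if $(w,1),(w',1)\in H_1$ and $w+\rec(N)=w'+\rec(N)$, then $w-w'\in U(\rec(N))$, so $(w,1)\equiv(w',1)\pmod{U(M)}$ and hence $(w,1)=(w',1)$ since $H$ has one element per residue class; thus the decomposition from~(1) has exactly $|H_1|$ distinct cosets. For the lower bound, I would take an arbitrary decomposition $N=\bigcup_{i=1}^m z_i+\rec(N)$ and observe that each $w$ with $(w,1)\in H_1$ lies in some $z_i+\rec(N)$, say $w=z_i+r$ with $r\in\rec(N)$; then $(w,1)=(z_i,1)+(r,0)$ is a decomposition in $M$ of the irreducible non-unit $(w,1)$, and $(z_i,1)$ has positive level so is not a unit, forcing $(r,0)\in U(M)$ and $z_i+\rec(N)=w+\rec(N)$. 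This assigns to each element of $H_1$ a coset of the given decomposition equal to its own, injectively by the first step, so $m\ge|H_1|$; together with~(1) the minimum is exactly $|H_1|$. Finally, when $m=|H_1|$ this assignment is a bijection, so each $z_i$ differs from a unique $w_i$ with $(w_i,1)\in H_1$ by a unit of $\rec(N)$, i.e.\ $(z_i,1)\equiv(w_i,1)\pmod{U(M)}$; replacing each $(w_i,1)$ by $(z_i,1)$ in $H$ (which preserves irreducibility and the one-per-class property) yields a Hilbert basis $H'$ with $\{z_1,\dots,z_m\}=\{y:(y,1)\in H'\}$, completing~(2).

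The single delicate fact underlying~(2) is that an irreducible element of $M$ cannot be written as a non-unit of $M$ plus a level-$0$ element --- which is exactly the $\rec(N)$-module statement ``a minimal module generator of $N$ is never a translate of another generator''. In the pointed case the whole argument collapses: $H$ and the Hilbert basis of $\rec(N)$ are unique, $U(\rec(N))=0$, the cosets are single points, and the $y_i$ are literally the level-$1$ elements of $H$.
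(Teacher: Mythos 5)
Your proof is correct and is essentially the argument the paper has in mind: the paper gives no details beyond citing \cite[2.12]{BG} for part (1) and calling the Hilbert-basis statements ``easy to prove'', and your level-function bookkeeping on the homogenization $M=C(P)\cap\ZZ^{d+1}$, combined with the structure theorem for Hilbert bases from Section 3, is the standard way to fill in all three parts. In particular your treatment of (2) --- irreducibility of a level-$1$ Hilbert basis element forcing the level-$0$ summand to be a unit, together with the observation that $|H_1|$ is independent of the choice of $H$ because $\lev$ is constant on residue classes modulo $U(M)$ --- correctly supplies the one point that genuinely needs an argument.
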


Part 1 is \cite[2.12]{BG}, and the statements about Hilbert bases are easy to prove. The theorem entitles us to call $N$ a \emph{finitely generated module} over $\rec(N)$. The computation goal \emph{Generation} can now be made precise in the inhomogeneous case as well: compute a Hilbert basis of $\rec(N)$ and a \emph{system of module generators} $\{y_1,\dots,y_m\}$. By the theorem it is enough to compute a Hilbert basis of $M$. However, it would be foolish to overlook the shortcut that is possible: all candidates $x\in M$ with $\lev(x)>1$ can be immediately discarded. This holds both for the primal and the dual algorithm of Normaliz. (The primal algorithm does only produce elements $x$ with $\lev(x)\ge 0$. For the dual algorithm that processes the inequalities defining $C(P)$ one must start with the inequality $\lev(x)\ge 0$.)

As a simple example we consider the polyhedron in Figure \ref{figpol}. 
\begin{figure}[hbt]
\begin{center}
	\begin{tikzpicture}[scale=0.7]
	
	\filldraw[yellow] (5,-0.5) -- (-2,-0.5) -- (0,1.5) -- (5,1.5) -- cycle;
	
	\foreach \x in {-2,...,5}
	\foreach \y in {-1,...,2}
	{
		\filldraw[fill=black] (\x,\y)  circle (1.5pt);
	}
	\draw[->] (-2.5,0) -- (5.5,0);
	\draw[->] (0,-1.5) -- (0,2.5);
	\draw[thick] (5,-0.5) -- (-2,-0.5) -- (0,1.5) -- (5,1.5); 
	\end{tikzpicture}
\end{center}
\caption{A polyhedron in $\RR^2$}\label{figpol}
\end{figure}

Normaliz writes the results in homogenized coordinates:
\begin{quotation}
\begin{verbatim}
2 module generators:
-1 0 1
0 1 1

1 Hilbert basis elements of recession monoid:
1 0 0 
\end{verbatim}
\end{quotation}
The result can be checked by inspection.

The set $N$ has a \emph{disjoint} decomposition into residue classes mod $G=\gp(\rec(N))$ (where $\gp(M)$ is the group generated by $M$):
$$
N=\bigcup_{i=1}^r N_i, \qquad N_i\neq\emptyset,\quad N_i\cap N_j=\emptyset\text{ if } i\neq j,\quad x\equiv y\mod G \text{ for all }
x,y\in N_i.
$$
If $y_1,\dots,y_m$ is a system of module generators of $N$ as an $\rec(N)$-module, then obviously $r\le m$; in particular, $r$ is finite. It is justified to call $r$ the \emph{module rank} of $N$ over $\rec(N)$ because of the following functorial process. Let $K$ be a field and let $R=K[\rec(N)]$ be the monoid $K$-algebra defined by $\rec(N)$. Let $K[N]$ be the $K$-vector space with basis $N$. The ``multiplication''  $\rec(N)\times N\to N$, $(x,y)\mapsto x+y$ makes $K[N]$ a module over $R$ \cite[p. 51]{BG}. Since $R$ is an integral domain, $K[N]$ has a well-defined rank, which is exactly $r$, as one sees by passage to the field of fractions of $R$. An intermediate step of this passage is the Laurent polynomial ring $L=K[G]$, and we can get $K[N]\otimes_R L$ by introducing $K$-coefficients to $N+G$. This set decomposes into the subsets $N_i+G$, and one has $N_i+G=x+G$ for every $x\in N_i$. Therefore $K[N]\otimes_R L$ is the direct sum of $r$ free $L$-modules of rank $1$. In the example above, the module rank is $2$.

If $y_1,\dots,y_m$ have been computed, then it is very easy to find the module rank $r$: we simply count their pairwise different residue classes modulo $G$. But we can also compute $r$ as the number of lattice points in a polytope, and Normaliz resorts to this approach if a system of module generators is unknown. The polytope is a cross-section of $P$ with a complement of $\rec(P)$:

\begin{theorem}\label{modrank}
Let $z_1,\dots,z_s$ be a $\ZZ$-basis of $G=\gp(\rec(N))$. There exist $z_{s+1},\dots,\allowbreak z_d\in\ZZ^d$ such that $z_1,\dots,z_d$ is a $\ZZ$-basis of $\ZZ^d$. Set $H=\ZZ z_{s+1}+\dots+\ZZ z_d$, and  let $\pi:\RR^d\to \RR H$ denote the projection defined by $\pi|G=0$ and $\pi|H=\id_H$.

Then $\pi(P)$ is a (rational) polytope, and the module rank $r$ is the number of lattice points in $\pi(P)$.	
\end{theorem}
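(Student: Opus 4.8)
The plan is to reduce the statement to an explicit bijection between the residue classes of $N$ modulo $G$ and the lattice points of $\pi(P)$. Recall that $r$ is, by definition, the number of those residue classes, and that the lattice points of $\pi(P)$ are exactly the points of $\pi(P)\cap H$, since $z_{s+1},\dots,z_d$ is part of a $\ZZ$-basis of $\ZZ^d$ and hence $\RR H\cap\ZZ^d=H$.

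First I would dispatch two preliminary points. Since $\rec(N)=\rec(P)\cap\ZZ^d$ and $\rec(P)=\RR_+\rec(N)$ is a rational cone, $\rec(P)$ is full-dimensional inside $\RR G=\RR\rec(P)$, and the sum of a set of monoid generators of $\rec(N)$ is a lattice point $p$ in its relative interior. Translating $p$ far out along $\rec(P)$ then gives $\RR G\cap\ZZ^d=G$: for $x\in\RR G\cap\ZZ^d$ and $N$ large, $Np$ and $Np+x$ both lie in $\rec(P)$ and are lattice points, so $x\in\gp(\rec(P)\cap\ZZ^d)=G$. In particular $G$ is saturated, so the vectors $z_{s+1},\dots,z_d$ exist, $\RR^d=\RR G\oplus\RR H$, and $\pi$ is the projection along $\ker\pi=\RR G\supseteq\rec(P)$. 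For the first assertion, write $P=Q+\rec(P)$ by Motzkin's Theorem~\ref{Motz}, with $Q$ a polytope chosen rational (as one may for a rational polyhedron); since $\pi$ is a rational linear map annihilating $\rec(P)$, we get $\pi(P)=\pi(Q)$, the convex hull of finitely many rational points, hence a rational polytope, and in particular $\pi(P)\cap H$ is finite.

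Next I would define $\Phi\colon N/{\equiv_G}\to\pi(P)\cap H$ by $\Phi([x])=\pi(x)$ and verify it is a bijection. It is well defined and injective because $G\subseteq\ker\pi$ while $\ker\pi\cap\ZZ^d=\RR G\cap\ZZ^d=G$, so $\pi(x)=\pi(y)$ for $x,y\in N$ forces $x\equiv y\pmod G$. Surjectivity is the real content and the step I expect to be the main obstacle. Given $w\in\pi(P)\cap H$, pick $p\in P$ with $\pi(p)=w$; then $u:=p-w\in\ker\pi=\RR G$, so the slice $P_w:=\{z\in\RR G:w+z\in P\}$ is nonempty (it contains $u$) and contains the translate $u+\rec(P)$, where $\rec(P)$ is full-dimensional in $\RR G$. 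Choosing $c$ in the relative interior of $\rec(P)$ with $B(c,\delta)\cap\RR G\subseteq\rec(P)$, the set $P_w$ contains $u+tc+(B(0,t\delta)\cap\RR G)$ for every $t>0$, i.e.\ balls within $\RR G$ of arbitrarily large radius; once such a radius exceeds the covering radius of the full-rank lattice $G$ in $\RR G$, the ball meets $G$ in a point $v$. Then $x:=w+v\in\ZZ^d\cap P=N$ and $\pi(x)=w$, so $\Phi([x])=w$. Hence $\Phi$ is bijective and $r=|N/{\equiv_G}|=|\pi(P)\cap H|$, the number of lattice points of $\pi(P)$.
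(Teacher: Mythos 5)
Your proof is correct and follows essentially the same route as the paper's: show that $G=\gp(\rec(N))$ is saturated in $\ZZ^d$ so the complement $H$ exists, use Motzkin's theorem to get $\pi(P)=\pi(Q)$ a (rational) polytope, identify residue classes mod $G$ with fibers of $\pi$ over lattice points, and prove surjectivity by exploiting that $\rec(P)$ is full-dimensional in $\RR G=\ker\pi$. The only difference is in the last step, and it is cosmetic: where you pass to arbitrarily large balls in the fiber slice and invoke the covering radius of $G$, the paper simply observes that $p+kx$, with $x$ a lattice point in the relative interior of $\rec(P)$ and $k\gg 0$, is already an integral preimage in $P$ (you do, however, carefully justify the saturation $\RR G\cap\ZZ^d=G$, which the paper only asserts).
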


\begin{proof}
The first statement amounts to the existence of a complement $H$ of $G$ in $\ZZ^d$, i.e., a subgroup $H$ with $\ZZ^d=G+H$ and $G\cap H=0$. Such a complement exists if and only if $\ZZ^d/G$ is torsionfree. Let $z\in\ZZ^d$ such that $kz\in G$ for some $k\in\ZZ$, $k>0$. Since $G=\RR \rec(P)\cap \ZZ^d$, we must have $x\in G$.

The polyhedron $P$ is the Minkowski sum $Q+\rec(P)$ with a polytope $Q$. Since $\rec(P)\subset \RR G$, we have $\pi(P)=\pi(Q)$, and therefore $\pi(P)$ is a polytope. Clearly, the lattice points in $P$ are mapped to lattice points in $\pi(P)$, and two such points have the same image if and only they differ by an element in $G$. 

The only critical question is whether every lattice point in $\pi(P)$ is hit by a lattice point in $P$ by the application of $\pi$. There is nothing to show if $G=0$ since $\pi$ is the identity on $\RR^d$ then. So assume that $G\neq 0$. Let $p\in \pi(P)$ be a lattice point, $p=\pi(q)$ with $q\in P$. One has $\pi(p-q)=0$, and therefore $p-q\in \RR G$. Note that $\RR G=\RR \rec(P)$. In other words, $\rec(P)$ is a fulldimensional cone in $\RR G$. It contains a lattice point $x$ in its (relative) interior. Thus $(p-q)+kx\in \rec(P)$ for $k\in\ZZ$, $k\gg 0$, and $q+(p-q)+kx\in\ZZ^d$ is a preimage of $p$ in $P$ for $k\gg 0$.	
\end{proof}

Let us now discuss \emph{Enumeration} in the inhomogeneous case. As in the homogeneous case, we can only compute the Hilbert series of $N=P\cap\ZZ^{d+1}$ modulo $U(\rec(N))$. Therefore it is enough to discuss the case in which $\rec(P)$ or, equivalently, $C(P)$ is pointed.

Normaliz computes the Hilbert series via a \emph{Stanley decomposition}. This is a disjoint decomposition of the set of lattice points $P\cap \ZZ^d$ into subsets of the form 
$$
D=u+\sum_{i=1}^r \ZZ_+v_i
$$
 where $r$ varies between $0$ and $\dim P$ and $v_1,\dots,v_r$ are linearly independent. Provided $\deg v_i>0$ for $i=1,\dots,r$, the Hilbert series of $D$ is given by
\begin{equation}
H_D(t)=\frac{t^{\deg u}}{(1-t^{deg v_1})\cdots(1-t^{\deg v_r})}.\label{StD}
\end{equation}
In order to get the Hilbert series of $P\cap \ZZ^d$, it only remains to sum the Hilbert series of the components of the Stanley decomposition. 

In\cite{BIS} the computation of the Stanley decomposition in the homogeneous case is described in detail. Therefore we only discuss how to derive the Stanley decomposition of $P\cap\ZZ^d$ from a Stanley decomposition of $C(P)\cap \ZZ^{d+1}$. We must intersect all components of the Stanley decomposition of $C(P)\cap\ZZ^{d+1}$ with the hyperplane $L_1$ of level $1$ points. Since the levels of all participating vectors are integral and $\ge 0$, in a sum of level $1$ exactly one summand must have level $1$ and the others must have level $0$. 

\begin{proposition}\label{StDec}
Suppose that $C(P)$ is pointed, and that $D$ is a component in the Stanley decomposition of $C(P)$. Let $v_1,\dots,v_e$ be the vectors of level $1$ among $v_1,\dots,v_r$, and $v_{e+1},\dots,v_f$ those of level $0$. Then the following hold:
\begin{enumerate}
\item if $\lev(u)=1$, then $D\cap L_1=u+\sum_{i=e+1}^f \ZZ_+v_i$.
\item If $\lev(u)=0$, then $D\cap L_1$ is the disjoint union of the sets $u+v_j+\sum_{i=e+1}^f \ZZ_+v_i$, $j=1,\dots,e$ (and thus empty if $e=0$).
\item if $\lev(u)>1$, then $D\cap L_1=\emptyset$. 
\end{enumerate}
\end{proposition}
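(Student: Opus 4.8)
The plan is to parametrize a general element of the Stanley component $D$ and read off exactly when it lands on the level-$1$ hyperplane $L_1$; the three cases of the proposition will then drop out from a single linear constraint. First I would record the structural facts I intend to use. By the construction of $C(P)$ its generators are the lifts $(v,1)$ of the vertices $v$ of $P$ together with the primitive generators of the extreme rays of $\rec(P)$; the former have level $1$, the latter level $0$. Since the vectors $v_1,\dots,v_r$ occurring in a component of a Stanley decomposition are ray generators of simplicial cones in the triangulation of $C(P)$, every $v_i$ has level $0$ or $1$, so in fact $f=r$ and the two families $v_1,\dots,v_e$ (level $1$) and $v_{e+1},\dots,v_f$ (level $0$) exhaust the list. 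I would also note that $u\in\ZZ^{d+1}\cap C(P)$, hence $\lev(u)$ is a nonnegative integer, and --- crucially --- that being a component of a Stanley decomposition means every $x\in D$ has a \emph{unique} expression $x=u+\sum_{i=1}^{f}n_i v_i$ with $n_i\in\ZZ_+$.

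Next I would compute the level of such an $x$. Linearity of $\lev$ together with $\lev(v_i)=1$ for $i\le e$ and $\lev(v_i)=0$ for $i>e$ gives $\lev(x)=\lev(u)+\sum_{i=1}^{e}n_i$, so $x\in D\cap L_1$ precisely when the nonnegative integers $n_1,\dots,n_e$ satisfy $\lev(u)+\sum_{i=1}^{e}n_i=1$, the remaining coordinates $n_{e+1},\dots,n_f$ being unconstrained. From here the three cases are immediate. If $\lev(u)=1$, the equation forces $n_1=\dots=n_e=0$ and leaves $x=u+\sum_{i=e+1}^{f}n_i v_i$ with arbitrary tail, which is (1). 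If $\lev(u)=0$, the equation reads $\sum_{i=1}^{e}n_i=1$, so exactly one $n_j$ with $1\le j\le e$ equals $1$ and all other $n_i$ with $i\le e$ vanish; collecting these possibilities over $j$ gives the union in (2), which is empty when $e=0$ because $0=1$ has no solution. If $\lev(u)>1$, then $\lev(x)\ge\lev(u)>1$ for every choice of the $n_i$, so $D\cap L_1=\emptyset$, which is (3).

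I do not expect a serious obstacle here: the statement is essentially an exercise in bookkeeping around the single constraint $\lev(x)=1$. The only assertion that is not settled by direct substitution is the \emph{disjointness} of the union in (2), and for that I would invoke the uniqueness of representations in $D$: if $u+v_j+\sum_{i=e+1}^{f}n_i v_i = u+v_{j'}+\sum_{i=e+1}^{f}n_i' v_i$ with $j,j'\in\{1,\dots,e\}$, then the two coefficient vectors $(n_1,\dots,n_f)$ they produce must coincide, which forces $j=j'$ and $n_i=n_i'$ for all $i$. This is the step that genuinely uses that $D$ is a component of a Stanley decomposition rather than an arbitrary set of the stated form; once it is in hand, the proposition follows.
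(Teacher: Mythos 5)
Your argument is correct and is essentially the paper's own justification, which is compressed into the single sentence preceding the proposition: since all participating levels are integers $\ge 0$ and $\lev$ is linear, the constraint $\lev(u)+\sum_{i\le e}n_i=1$ forces the three cases (note only that the statement's indexing $f\le r$ allows vectors of level $\ge 2$, whose coefficients are then forced to vanish, rather than your assumption $f=r$), and disjointness in (2) follows from the linear independence of the $v_i$. No further comment is needed.
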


Note that $f-e \le \dim P$ if $D\cap L_1\neq \emptyset$. The proposition shows that the computation of a Stanley decomposition of $P\cap\ZZ^d$ is as easy (or difficult) as the computation for $C(P)\cap\ZZ^{d+1}$.

In the homogeneous case all degrees are nonnegative. In the inhomogeneous case this requirement would be an unnecessary restriction. Normaliz takes care of this aspect by computing a \emph{shift}. For our simple example above we obtain with $\deg(x_1,x_2)=x_1$
\begin{quotation}
\begin{verbatim}
Hilbert series:
1 1 
denominator with 1 factors:
1: 1  

shift = -1
\end{verbatim}
\end{quotation}
Thus the Hilbert series is
$$
t^{-1}\,\frac{1+t}{1-t}=\frac{t^{-1}+1}{1-t}.
$$

Normaliz lets the user specify a linear form $\delta$ that plays the role of the dehomogenization. This is already useful for compatibility with the input formats of other packages: often the first coordinate is used for (de)homogenization.

\begin{remark}
Inhomogeneous systems are often created by strict linear inequalities $\lambda(x)>0$ where $\lambda$ is linear (in addition to non-strict ones). These can be treated as inhomogeneous systems, but Normaliz also offers a variant called ``excluded faces''. Then homogenization (with its increase in dimension) is avoided at the expense of an inclusion-exclusion approach. This variant can also be used by NmzIntegrate.
\end{remark}

\section{Bottom decomposition}\label{bottom}

As mentioned above, Normaliz computes a triangulation of the cone $C$ whose rays are given by the input (or precomputed) system of generators, a partial triangulation for Hilbert bases and a full one for Hilbert series. 

The complexity of the Normaliz algorithm depends mainly on two parameters. The first is the size of the triangulation. The second is the determinant sum (or normalized volume) that determines the time needed for the evaluation of the simplicial cones in the triangulation. In the following $\vol$ denotes the $\ZZ^d$-normalized volume in $\RR^d$. It is the Euclidean volume multiplied by $d!$.

Let $\sigma$ be a simplicial cone generated by linearly independent vectors $v_1,\dots,v_d$. Then the normalized volume of the \emph{basic simplex} spanned by $0$ and $v_1,\dots,v_d$ is the absolute value of the determinant of the $d\times d$-matrix with rows $v_1,\dots,v_d$. Therefore we call it the \emph{determinant} $\det\sigma$ of $\sigma$. It is also the number of lattice points in the semi-open parallelotope
$$
\para(v_1,\dots,v_d)=\bigl\{a_1v_1+\sum+a_dv_d: 0\le a_i<1,\ i=1,\dots,d\bigr\},
$$
which is also referred as the \emph{fundamental domain } of $\sigma$. Normaliz must generate these points when evaluating $\sigma$ for the Hilbert basis or Hilbert series. Therefore the determinant sum $\detsum \Sigma=\sum_{\sigma\in\Sigma}\det \sigma$ of $\Sigma$ is a critical complexity parameter. In the following we explain how to optimize it. 

\begin{definition}
Let $G\subset \ZZ^d$ be a finite set. We call the polyhedron $\conv^{\wedge}(G)=\{x\in\RR^d: x=\sum_{g\in G}a_gg,\ a_g\ge 0,\ \sum_{g\in G}a_g\ge 1 \}$ the \emph{upper convex hull} of $G$. The \emph{bottom} $B(G)$ of $G$ is the polyhedral complex of the compact facets of $\conv^{\wedge}(G)$  (or just their union). 
\end{definition}

Let $C$ be the cone generated by $G$. Then $\conv^{\wedge}(G)=\conv(G)+C$, and $B(G)$ is nonempty if and only if $C$ is pointed, or, equivalently, $\conv^{\wedge}(G)$ has a vertex. In this case the bottom is indeed a set of polytopes of dimension $\dim C-1$ since their union is in bijective correspondence with a cross-section of $C$.  Figure \ref{botfig} illustrates the notion of bottom.
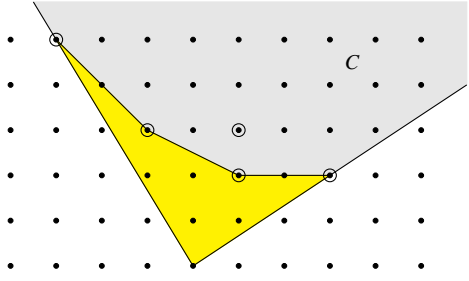
\begin{figure}[hbt]
\begin{center}
	\begin{tikzpicture}[scale=0.6]
	\filldraw[gray!20] (-3.5,5.833) -- (0,0) -- (6,4) -- (6,5.833) -- cycle;
	\filldraw[yellow] (0,0) -- (-3,5) -- (-1,3) -- (1,2) -- (3,2) -- cycle;
	
	\draw (-3,5) -- (-1,3) -- (1,2) -- (3,2);
	
	\draw (-3.5,5.833) -- (0,0) -- (6,4);
	\foreach \x in {-4,...,5}
	\foreach \y in {0,...,5}
	{
		\filldraw[fill=black] (\x,\y)  circle (1.5pt);
	}
	
	\draw (-3,5) circle (4pt) node at (3.5,4.5){\tiny $C$};
	\draw (-1,3) circle (4pt);
	\draw (1,3) circle (4pt);
	\draw (3,2) circle (4pt);
	\draw (1,2) circle (4pt);  
	\end{tikzpicture}
	\end{center}
	\caption{The bottom}\label{botfig}
\end{figure}

As usual, we assume from now on that $C\subset \RR^d$ is pointed and of dimension $d$, and that the monoid $M=C\cap\ZZ^d$ is to be computed.

\begin{definition}
The cones $\RR_+F$ where $F$ runs through the facets in $B(G)$ form the \emph{bottom decomposition} of $C$ with respect to $G$. 

A triangulation $\Sigma$ of $C$ is a \emph{bottom triangulation} with respect to $G$ if every simplicial cone $\sigma\in\Sigma$ is generated by elements of $G\cap F$ where $F$ is a facet of $B(G)$.
\end{definition}

Bottom triangulations are optimal with respect to determinant sum:

\begin{proposition}	Let $\Sigma$ be a bottom triangulation of $C$ with respect to $G$. Then $\detsum(\Sigma)\le\detsum(\Delta)$ for all triangulations $\Delta$ with rays in $G$.
\end{proposition}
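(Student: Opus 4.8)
The plan is to reinterpret the determinant sum as a volume and then compare two regions. By the description of $\det\sigma$ recalled above, a simplicial cone generated by linearly independent $v_1,\dots,v_d$ satisfies $\det\sigma=\vol\bigl(\conv(0,v_1,\dots,v_d)\bigr)$. Hence for any triangulation $\Delta$ of $C$ whose simplicial cones are generated by elements of $G$ --- writing $\delta=\RR_+\langle v_1^\delta,\dots,v_d^\delta\rangle\in\Delta$ with $v_i^\delta\in G$ --- I would introduce the region $P_\Delta=\bigcup_{\delta\in\Delta}\conv(0,v_1^\delta,\dots,v_d^\delta)$. The cones of $\Delta$ have pairwise disjoint interiors, and the interior of each simplex $\conv(0,v_1^\delta,\dots,v_d^\delta)$ lies in $\inte\delta$ (both being $d$-dimensional), so these simplices have disjoint interiors and $\detsum(\Delta)=\vol(P_\Delta)$. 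Since the bottom triangulation $\Sigma$ is itself a triangulation of this kind, it suffices to prove $\vol(P_\Sigma)\le\vol(P_\Delta)$ for every such $\Delta$.

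Next I would describe $P_\Sigma$ geometrically as the region ``below'' the bottom, $R(G)=\bigcup_{x\in B(G)}[0,x]$. If $\sigma\in\Sigma$ is generated by $v_1,\dots,v_d\in G\cap F$ for a facet $F$ of $B(G)$, then $\conv(v_1,\dots,v_d)\subseteq F\subseteq B(G)$, hence $\conv(0,v_1,\dots,v_d)\subseteq R(G)$; thus $P_\Sigma\subseteq R(G)$. For the reverse inclusion I would prove the key claim: $R(G)\subseteq P_\Delta$ for \emph{every} triangulation $\Delta$ of $C$ with generators in $G$. Applied to $\Sigma$ this gives $P_\Sigma=R(G)$, and then $\detsum(\Sigma)=\vol(P_\Sigma)=\vol(R(G))\le\vol(P_\Delta)=\detsum(\Delta)$, which is the assertion.

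To prove the key claim, fix $y\in R(G)$, say $y=tx$ with $x\in B(G)$ and $t\in[0,1]$, and choose $\delta=\RR_+\langle v_1,\dots,v_d\rangle\in\Delta$ (with $v_i\in G$) containing $y$, so that $y=\sum_i\lambda_iv_i$ with $\lambda_i\ge0$. It suffices to check $\lambda:=\sum_i\lambda_i\le1$, for then $y=\sum_i\lambda_iv_i+(1-\lambda)\cdot0\in\conv(0,v_1,\dots,v_d)\subseteq P_\Delta$. Assume $\lambda>1$. Then $y/\lambda=\sum_i(\lambda_i/\lambda)v_i$ is a convex combination of points of $G$, so $y/\lambda\in\conv^\wedge(G)$; but also $y/\lambda=(t/\lambda)x$ with $t/\lambda<1$. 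Now $x$ lies on a compact facet $F'$ of $\conv^\wedge(G)$, whose outer normal $\nu$ is strictly positive on $C\setminus\{0\}$ (compactness of $F'$ is equivalent to its recession cone $C\cap\nu^\perp$ being $\{0\}$), so $\conv^\wedge(G)\subseteq\{z:\nu(z)\ge\beta\}$ with $\beta=\nu(x)>0$; hence $\nu\bigl((t/\lambda)x\bigr)=(t/\lambda)\beta<\beta$ and $(t/\lambda)x\notin\conv^\wedge(G)$ --- a contradiction.

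The step I expect to be the main obstacle is the last one: justifying that the points of $B(G)$ are precisely the radially minimal points of $\conv^\wedge(G)$. The remark preceding the definition of the bottom (that $B(G)$ is in bijective correspondence with a cross-section of $C$) already carries one most of the way, and the normal-vector computation above --- which uses that $C$ is pointed and full-dimensional and that $0\notin\conv^\wedge(G)$ (so $\beta>0$) --- completes it. The remaining ingredients, namely the volume interpretation of $\det\sigma$, the disjointness of the small simplices, and the inclusion $P_\Sigma\subseteq R(G)$, are routine.
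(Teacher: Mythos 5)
Your proof is correct and follows essentially the same route as the paper: identify $\detsum$ with the volume of the union of basic simplices and observe that the region under the bottom, $R(G)=\bigcup_F\conv(0,F)$, is contained in that union for every triangulation $\Delta$ with rays in $G$, with equality for a bottom triangulation. The paper asserts the containment $R(G)\subseteq P_\Delta$ without proof, whereas you supply the supporting-hyperplane argument (positivity of the normal $\nu$ of a compact facet on $C\setminus\{0\}$) that justifies it; that detail is sound.
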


\begin{proof}
The union of the basic simplices of $\Sigma$ is the union of the polytopes $\conv(0,F)$ where $F$ runs through the facets of $B(G)$ (see Figure \ref{botfig}). Therefore its determinant sum is the volume of the union $D$ of these polytopes. But  $D$ is contained in the union of the basic simplices of the simplicial cones in $\Delta$, and therefore the volume of $D$ bounds $\detsum \Delta$ from below.	
\end{proof}

Evidently, if the points of $G$ lie in one hyperplane, all triangulations of $C$ with rays through $G$ have the same determinant sum, namely the normalized volume of the polytope $\conv(G,0)$. However, in general the determinant sums can differ widely. Therefore it makes sense to compute a bottom triangulation. First we determine the compact facets of $\conv^{\wedge}(G)$. As usual, let us say that the facet $F$ of the $d$-dimensional polyhedron $Q\subset \RR^d$ is \emph{visible} from $x\in \RR^d$ if $\lambda(x)<0$ for the affine-linear form $\lambda$ defining the hyperplane through $F$ (normed such that $\lambda(y)\ge0$ for $y\in Q$.)

\begin{proposition}\label{botcomp}
Let $F$ be a facet of $\conv^{\wedge}(G)$. Then the following are equivalent:
\begin{enumerate}
\item $F$ belongs to $B(G)$;
\item $F$ is visible from $0$.	
\end{enumerate}
\end{proposition}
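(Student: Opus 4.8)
The plan is to read off both conditions from the affine-linear form that cuts out the hyperplane spanned by $F$, and to connect them by an elementary recession-cone computation.

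Set $Q=\conv^{\wedge}(G)$. First I would fix notation and record the structural facts I intend to use: since $Q=\conv(G)+C$ (noted in the text), the recession cone of $Q$ is $C$, and $Q$ is a $d$-dimensional pointed polyhedron. Let $\lambda$ be the affine-linear form defining the hyperplane through $F$, normed so that $\lambda(y)\ge 0$ for all $y\in Q$, and write $\lambda=\ell-c$ with $\ell$ linear and $c=-\lambda(0)$. By the definition of visibility, $F$ is visible from $0$ precisely when $\lambda(0)<0$, i.e.\ when $c>0$. So the whole statement reduces to showing that $F\in B(G)$---equivalently, that the facet $F$ is bounded---if and only if $c>0$.

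Next I would establish the two links to the condition ``$\ell>0$ on $C\setminus\{0\}$''. On the one hand, $\ell$ is the linear part of a form bounded below on $Q$, so $\ell\ge 0$ on $\rec(Q)=C$, and hence $\min_{w\in C}\ell(w)=\ell(0)=0$. Since $F$ is a nonempty face, $\lambda$ attains its minimum $0$ over $Q$ along $F$, so $c=\min_{y\in Q}\ell(y)$; splitting $Q=\conv(G)+C$ and minimizing the two summands separately gives $c=\min_{p\in\conv(G)}\ell(p)=\min_{g\in G}\ell(g)$, using that a linear form attains its minimum over the polytope $\conv(G)$ at a vertex and that every vertex of $\conv(G)$ lies in $G$. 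Therefore $c>0$ if and only if $\ell(g)>0$ for all $g\in G$, which, because $C=\cone(G)$ and $\ell$ is linear, is the same as $\ell>0$ on $C\setminus\{0\}$. On the other hand, the recession cone of the face $F$ is $\rec(F)=\rec(Q)\cap\Ker\ell=C\cap\Ker\ell$, and since $\ell\ge 0$ on $C$ this intersection is trivial exactly when $\ell$ has no zero in $C\setminus\{0\}$; as a polyhedron is bounded if and only if its recession cone is $\{0\}$, this means $F\in B(G)$ if and only if $\ell>0$ on $C\setminus\{0\}$. Chaining the two equivalences through the common condition completes the argument.

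I do not expect a real obstacle; the proof is essentially bookkeeping. The two points that most need care are the identity $c=\min_{g\in G}\ell(g)$ (which uses both the decomposition $Q=\conv(G)+C$ and the sign $\ell\ge 0$ on $C$) and the recession-cone identity $\rec(F)=C\cap\Ker\ell$. One should also clear the degenerate case $0\in G$ at the start: then $Q=C$, which has no bounded facet and no facet satisfying $\lambda(0)<0$, so the statement is vacuously true; otherwise all elements of $G$ are nonzero, as the chain above tacitly uses.
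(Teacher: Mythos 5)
Your proof is correct, but it takes a genuinely different route from the paper's. You funnel both conditions through a single intermediate statement -- that the linear part $\ell$ of $\lambda$ is strictly positive on $C\setminus\{0\}$ -- reaching it from one side via the identity $-\lambda(0)=\min_{g\in G}\ell(g)$ (obtained by minimizing $\ell$ separately over the two summands of $\conv^{\wedge}(G)=\conv(G)+C$ and using $\ell\ge 0$ on $\rec(\conv^{\wedge}(G))=C$), and from the other side via the recession-cone computation $\rec(F)=C\cap\Ker\ell$ together with the fact that a nonempty polyhedron is bounded iff its recession cone is trivial. The paper instead argues the two implications directly and locally: for $1\Rightarrow 2$ it follows the ray from $0$ through a point $x\in F$ (if $\lambda(0)=0$ this ray stays in the hyperplane and in $\conv^{\wedge}(G)$ from $x$ on, contradicting compactness; if $\lambda(0)>0$ then $\lambda$ turns negative beyond $x$, contradicting $\lambda\ge 0$); for $2\Rightarrow 1$ it picks an unbounded point $y\in F\setminus\conv(G)$, rescales it into $\conv(G)$, and reads off $\lambda(0)\ge 0$ by interpolation along the segment from $0$ to $y$. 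Your version is more structural and delivers both directions from one chain of equivalences; the paper's is shorter and needs no machinery beyond convexity of segments and rays. One small quibble: your dismissal of the case $0\in G$ (``$Q=C$ has no bounded facet and no visible facet, so the statement is vacuous'') tacitly assumes $d\ge 2$; for $d=1$ the unique facet $\{0\}$ of $C=\RR_+$ is compact yet not visible from $0$. This degenerate configuration lies outside the intended setting (elsewhere the paper assumes $0\notin G$), and the paper's own proof would also break there, so it does not affect the verdict.
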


\begin{proof}
We choose $\lambda$ as an affine-linear form defining $F$ and a point $x$ of $F$. Let $H$ be the hyperplane spanned by $F$. Suppose first that $\lambda(0)=0$. Then $\lambda$ vanishes on the whole ray from $0$ through $x$, and since this ray belongs to $\conv^{\wedge}(G)$ from $x$ on, it is impossible that $H\cap \conv^{\wedge}(G)$ is compact. The assumption that $\lambda(x)>0$ implies that $\lambda$ has negative values on this ray in points beyond $x$, and this is impossible as well. This proves 1 $\implies$ 2.

Conversely assume that $F$ is visible from $0$, but not compact. Then it is not contained in the compact polytope $P=\conv(G)$. Let $y$ be a point in $F\setminus P$, $y=\sum_{g\in G}a_g g$ with $a=\sum a_g\ge1$, all $a_g\ge 0$. Then $y/a\in P$, and since $\lambda(y)=0$ and $\lambda(y/a)\ge 0$, it follows that $\lambda(0)\ge 0$ since $y/a$ lies between $0$ and $y$. This is a contradiction.
\end{proof}

Normaliz uses lexicographic triangulations (see \cite{BIS}). These are uniquely determined by the order in which the elements are successively added in building the cone. Therefore we can triangulate $\RR_+F$ separately for all bottom facets $F$ using only points in $G\cap F$. These triangulations coincide on the intersections of the cones $\RR_+F$ and can be patched to a triangulation of $\RR_+C$.

Normaliz does not blindly compute triangulations, taking the set $G$ in the order in which it is given. In the presence of a grading it first orders the generating set by increasing degree, and this has already a strong effect on the determinant sum. Nevertheless, bottom decomposition can often improve the situation further.

If the Hilbert basis of $C\cap\ZZ^d$ can be computed quickly by the dual algorithm, one can use it as input for a second run that computes the Hilbert series. (Since version 3.2.0, Normaliz tries to guess whether the primal or the dual algorithm is better for the given input, but the algorithm can also be chosen by the user.) It is clear that bottom decomposition with $G$ being the Hilbert basis, produces the smallest determinant sum of any triangulation of $C$ with rays through integer points. But the Hilbert basis has often many more elements than the set of extreme rays, and this can lead to a triangulation with a much larger number of simplicial cones.  Despite of reducing the determinant sum, it may have a negative effect on computation time. The following example, a Hilbert series computation in social choice theory (input file \texttt{CondEffPlur.in} of the Normaliz distribution; see \cite{BI}, \cite{BIS} or Schürmann \cite{Sch}), demonstrates the effect; see Table \ref{CondEff}.
\begin{table}[hbt]
\setlength{\tabcolsep}{3.2pt}
\renewcommand{\arraystretch}{1.4}
\begin{center}
\begin{tabular}{|r|r|r|r|}\hline
input	& triangulation size& determinant sum& computation time\\ \hline
inequalities & 347,225,775,338 &4,111,428,313,448  & 112:43:17 h\\ \hline
inequalities, \texttt{-b} & 288,509,390,884 & 1,509,605,641,358 & 84:26:19 h\\ \hline
Hilbert basis, \texttt{-b}& 335,331,680,623&1,433,431,230,802& 97:50:05 h \\ \hline
\end{tabular}
\end{center}
\vspace*{2ex}
\caption{Effect of bottom decomposition}\label{CondEff}
\end{table}
With the input ``inequalities'', Normaliz first computes the extreme rays and then applies the primal algorithm to compute the Hilbert series. The option \texttt{-b} forces bottom decomposition. The computation times were taken on a system equipped with $4$ Xeon  E5-2660 at 2.20GHz,  using $30$ parallel threads.

At present Normaliz computes the bottom facets as suggested by Proposition \ref{botcomp}. Since we must homogenize the polyhedron $\conv^{\wedge}(G)$, this amounts to doubling the set $G$ to $G\times\{0\}\cup G\times\{1\}\in \RR^{d+1}$. The advantage of this approach is that one simultaneously computes the facets of $C$ and the bottom facets. Nevertheless, the time spent on this computation can outweigh the saving by a smaller determinant sum. Therefore Normaliz only applies bottom decomposition if asked for by the user or if the bottom is very ``rough''. Roughness is measured by the ratio of the largest degree of a generator and the smallest. At present bottom decomposition is activated if the roughness is $\ge 10$.

We will try to improve the efficiency of bottom decomposition by speeding up its computation. The following proposition suggests a potential approach:

\begin{proposition}\label{bothelp}
With the notation introduced above, let $z\in C$. Then the following are equivalent for a set $F\subset\RR^d$:
\begin{enumerate}
\item $F$ is a facet of $B(G)$;
\item $F$ is a facet of $\conv(G)+\RR_+z$ that is visible from $0$; 
\end{enumerate}
\end{proposition}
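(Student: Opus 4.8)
The plan is to reduce both conditions to a single statement — ``$F$ is a facet of $\conv(G)+C'$ visible from $0$'', with $C'=C$ in case (1) and $C'=\RR_+z$ in case (2) — and then to show that replacing $C$ by the ray $\RR_+z$ changes neither these facets nor the hyperplanes that support them. For (1) the reduction is immediate: by definition $B(G)$ is the complex of compact facets of $\conv^{\wedge}(G)=\conv(G)+C$, and by Proposition \ref{botcomp} a facet of $\conv^{\wedge}(G)$ is compact if and only if it is visible from $0$. For (2) there is nothing to do. So in either case $F$ is cut out of $\conv(G)+C'$ by an affine form $\lambda$ with $\lambda\ge0$ on $\conv(G)+C'$, $\lambda(0)<0$ and $\dim F=d-1$; let $\ell$ be the linear part of $\lambda$, so $\lambda(x)=\ell(x)+\lambda(0)$.

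The key observation is that such a $\lambda$ is automatically strictly positive on $C\setminus\{0\}$. Indeed, every $g\in G$ lies in $\conv(G)\subseteq\conv(G)+C'$, so $\lambda(g)\ge0$ and hence $\ell(g)=\lambda(g)-\lambda(0)\ge-\lambda(0)>0$; since $C=\cone(G)$, this gives $\ell>0$ on $C\setminus\{0\}$. Two things follow. First, $\lambda\ge0$ on all of $\conv(G)+C$: for $p\in\conv(G)$ and $c\in C$ we get $\lambda(p+c)=\lambda(p)+\ell(c)\ge0$ (and likewise on $\conv(G)+\RR_+z$). Second, the face that $\lambda$ cuts out of $\conv(G)+C$ already lies in $\conv(G)$: if $\lambda(p+c)=0$ with $p\in\conv(G)$ and $c\in C$, then the two nonnegative summands $\lambda(p)$ and $\ell(c)$ both vanish, so $c=0$. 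The identical computation with $c$ ranging over $\RR_+z$ instead of $C$ shows that $\lambda$ cuts the same set out of $\conv(G)+\RR_+z$. Hence
\[
\{\lambda=0\}\cap(\conv(G)+C)\;=\;\{\lambda=0\}\cap\conv(G)\;=\;\{\lambda=0\}\cap(\conv(G)+\RR_+z),
\]
and this common set, being contained in the polytope $\conv(G)$, is compact.

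Both implications then drop out. If $F$ is as in (1), take the defining form $\lambda$; the display shows $F=\{\lambda=0\}\cap(\conv(G)+\RR_+z)$, which is a $(d-1)$-dimensional face of $\conv(G)+\RR_+z$ — i.e.\ a facet — visible from $0$ because $\lambda(0)<0$, and this is (2). Conversely, if $F$ is as in (2) with defining form $\lambda$, then by the first consequence $\lambda\ge0$ on $\conv(G)+C$, and by the display $F=\{\lambda=0\}\cap(\conv(G)+C)$ is a $(d-1)$-dimensional face, hence a facet, of $\conv(G)+C$; it is compact because it lies in $\conv(G)$, and it is visible from $0$, so Proposition \ref{botcomp} puts it in $B(G)$, which is (1).

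I do not expect a serious obstacle here: the proposition becomes soft once one isolates the strict-positivity observation of the second paragraph. The points needing a word of care are dimension-theoretic — that a $(d-1)$-dimensional face of a $d$-dimensional polyhedron is a facet, which forces one to check that $\conv(G)+\RR_+z$ is full-dimensional (true whenever $z\ne0$, since then $\RR z$ is not contained in the direction space of $\aff(G)$), and the degenerate reading $z=0$, where $\conv(G)+\RR_+z$ collapses to $\conv(G)$ and the statement holds only if $\conv(G)$ is itself full-dimensional. The conceptual reason behind the proposition is exactly what the key observation isolates: a bottom facet of $\conv(G)+C$ is supported by a hyperplane whose inner normal is strictly positive on $C\setminus\{0\}$, so it cannot tell $\conv(G)+C$ apart from $\conv(G)+\RR_+z$.
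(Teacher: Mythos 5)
Your proof is correct. The paper offers no proof to compare against --- it states only that ``the easy proof is left to the reader'' --- and your argument is the natural one: the single observation that a supporting form $\lambda$ of a facet visible from $0$ has linear part strictly positive on $C\setminus\{0\}$ (because $\ell(g)=\lambda(g)-\lambda(0)>0$ for all $g\in G$) immediately yields both that $\lambda$ supports $\conv(G)+C$ and $\conv(G)+\RR_+z$ simultaneously and that the face it cuts out lies in the compact set $\conv(G)$, which is the whole content of the equivalence. Your closing caveats are also apt: the definition of visibility in the paper presupposes a full\mbox{-}dimensional polyhedron, and for $z\ne 0$ the full\mbox{-}dimensionality of $\conv(G)+\RR_+z$ follows because any nonzero $z=\sum a_g g\in C$ satisfies $\mu(z)=\sum a_g>0$ for the affine form $\mu\equiv 1$ on $G$, so $z$ cannot lie in the direction space of $\aff(G)$; the degenerate case $z=0$ requires $\conv(G)$ itself to be full\mbox{-}dimensional, which is exactly the situation the paper has in mind when it speaks of the ``roof'' of $\conv(G)$.
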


The easy proof is left to the reader. If one chooses $z=0$ in Proposition \ref{bothelp}, then one must compute all facets of the polytope $\conv(G)$, not only those in the bottom, but also those in the ``roof'. Choosing $z\ne 0$, for example in the interior of $C$, ``blows the roof off'', and it may be the better choice.

\section{Integral closure as a module}\label{ModGen}

Let $M\subset \ZZ^d$ be a positive affine monoid, $L\supset \gp(M)$ a subgroup of $\ZZ^d$, and $C$ the cone generated by $M$. Then $\overline M_L=C\cap L$ is the integral closure of $M$. It is not only a finitely generated monoid itself, but also a finitely generated $M$-module: there exist $y_1,\dots,y_m\in \overline M_L$ such that $\overline M_L=\bigcup_{i=1}^m y_i+M$. If $M$ (and therefore $\overline M_L$) is positive, then the set $\{y_1,\dots,y_m\}$ is unique once it is chosen minimal. It contains $0$ since $M\subset \overline M_L=C\cap L$.

Geometrically one can interpret the difference $\overline M_L=C\cap L\setminus M$ as the set of ``gaps'' or "holes'' of $M$ in $\overline M_L=C\cap L$, and the nonzero elements of $\{y_1,\dots,y_m\}$ are the ``fundamental holes'' in the terminology of \cite{KLRY}. Since version 3.0.0 Normaliz computes the set $\{y_1,\dots,y_m\}$, and therefore the fundamental holes. 

In the following we assume $L=\ZZ^d$, and set $\widetilde M=\overline M_{\ZZ^d}$. (In \cite{BG} $\overline M$ is reserved for the normalization $\overline M_{\gp(M)}$.)  Evidently the Hilbert basis elements of $\widetilde M$ outside $M$ belong to $\{y_1,\dots,y_m\}$, but in general this set is much larger than the Hilbert basis. Let $M$ be the monoid generated by linearly independent vectors $v_1,\dots,v_d$. Then the lattice points in $\para(v_1,\dots,v_d)$ form a system of module generators of $\widetilde M$, but in general they do not all belong to the Hilbert basis; see Figure \ref{figmod} where $G$ is generated by $(2,1)$ and $(1,3)$. The Hilbert basis elements outside $G$ are only $(1,1) $and $(1,2)$.
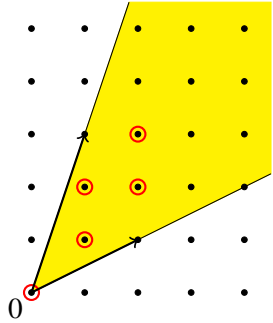
\begin{figure}[hbt]
\begin{center}
	\begin{tikzpicture}[scale=0.7]
	\filldraw[yellow] (0,0) -- (1.833,5.5) -- (4.5,5.5) -- (4.5,2.25) -- cycle;
	\draw (0,0) -- (1.833,5.5);
	\draw (0,0) -- (4.5,2.25) node at (-0.3,-0.3){\small $0$};
	\foreach \x in {0,...,4}
	\foreach \y in {0,...,5}
	{
		\filldraw[fill=black] (\x,\y)  circle (1.5pt);
	}
	\draw[red,thick] (1,1) circle (4pt);
	\draw[red,thick] (2,3) circle (4pt);
	\draw[red,thick] (1,2) circle (4pt);
	\draw[red,thick] (2,2) circle (4pt);
	\draw[red,thick] (0,0) circle (4pt);
	\draw[->,thick] (0,0) -- (1,3);
	\draw[->,thick] (0,0) -- (2,1);
	\end{tikzpicture}
\end{center}
\caption{Module generators of integral closure}\label{figmod}
\end{figure}

Since Normaliz computes the sets $\para(v_1,\dots,v_d)$ for the simplicial cones $\RR_+v_1+\dots+\RR_+v_d$ in a triangulation of $C$ with rays in a given generating set of $M$, it is only a matter of restricting the ``reducers'' in the ``global'' reduction to elements of $G$. 

\begin{proposition}\label{modori}
Let $G\subset \ZZ^d$ generate the positive affine monoid $M\subset\ZZ^d$, and let $\Sigma$ be a triangulation of $C$ with rays in $G$. Then the union $H$ of the sets $\para(\sigma)\cap\ZZ^d$, $\sigma\in\Sigma$ generates the module $\widetilde M$ over $M$.

An element $y\in H$ belongs to the minimal generating set of $\widetilde M$ if and only if $y-x\notin C$, for all $x\in G$, $x\neq 0$.
\end{proposition}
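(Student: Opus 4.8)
The plan is to prove the two assertions separately. For the generation statement, I would take an arbitrary $z\in\widetilde M=C\cap\ZZ^d$ and use that $\Sigma$ triangulates $C$ to pick $\sigma\in\Sigma$ with $z\in\sigma$; by hypothesis $\sigma=\RR_+v_1+\dots+\RR_+v_d$ with $v_1,\dots,v_d\in G$. Writing $z=\sum_{i=1}^d a_iv_i$ with $a_i\ge0$ and splitting $a_i=\lfloor a_i\rfloor+\{a_i\}$, I get $z=y+w$ where $y=\sum\{a_i\}v_i\in\para(v_1,\dots,v_d)$ and $w=\sum\lfloor a_i\rfloor v_i$ is a nonnegative integer combination of elements of $G$, hence $w\in M$. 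Integrality of $z$ and $w$ forces $y\in\ZZ^d$, so $y\in\para(\sigma)\cap\ZZ^d\subseteq H$. This gives $\widetilde M\subseteq\bigcup_{y\in H}(y+M)$; the reverse inclusion holds because $H\subseteq C\cap\ZZ^d=\widetilde M$ and $\widetilde M$ is an $M$-module. Thus $H$ generates $\widetilde M$ over $M$.

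For the second assertion I would first reduce ``$y$ is a minimal module generator'' to a local condition. Since $M$ is positive, $\widetilde M$ has a unique minimal system of module generators over $M$ (as recalled at the beginning of the section), and since $C$ is pointed there is a linear form $\ell$ on $\RR^d$, integer-valued on $\ZZ^d$, with $\ell(v)>0$ for all $v\in C\setminus\{0\}$, hence for all $v\in\widetilde M\setminus\{0\}$. An easy induction on $\ell(y)$ then shows that $y\in\widetilde M$ lies in the minimal generating set if and only if it is \emph{not} of the form $y=y'+w$ with $y'\in\widetilde M$ and $w\in M\setminus\{0\}$: such a decomposition makes $y$ redundant (expand $y'$ in generators, which is legitimate because $\ell(y')<\ell(y)$), while in the absence of any such decomposition the only generator that can produce $y$ is $y$ itself.

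It then remains to identify this non-reducibility with the stated condition that $y-x\notin C$ for all $x\in G$ with $x\neq0$. If $y=y'+w$ as above, write $w=\sum_{x\in G}c_xx$ with $c_x\in\ZZ_+$; since $w\neq0$ there is some $x_0\in G\setminus\{0\}$ with $c_{x_0}\ge1$, and then $y-x_0=y'+(w-x_0)$ is a sum of two elements of $C$, hence lies in $C$ (and in $\ZZ^d$, so in $\widetilde M$). Conversely, if $y-x\in C$ for some $x\in G\setminus\{0\}$, then $y-x\in C\cap\ZZ^d=\widetilde M$ and $y=(y-x)+x$ exhibits $y$ as reducible, since $x\in M\setminus\{0\}$. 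Combining the two equivalences yields the claim.

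I do not expect a serious obstacle here. The one step requiring genuine care is the reduction from ``minimal module generator'' to the purely local non-reducibility condition, where positivity of $M$ (hence pointedness of $C$ and the existence of the separating form $\ell$) is essential; everything else is a direct unwinding of the definitions of $\para$, of $\widetilde M=C\cap\ZZ^d$, and of the cone $C$ generated by $G$, together with the closedness of $C$ under addition.
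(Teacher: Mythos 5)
Your proof is correct and follows essentially the same route as the paper: the paper treats the first statement as standard (exactly your floor/fractional-part decomposition) and proves the second by the same two-way argument, namely that $y=z+m$ with $m\in M\setminus\{0\}$ forces $y-x\in C$ for some $x\in G$ after expanding $m$ in generators, and conversely $y-x\in C$ gives $y\in(y-x)+M$. The only difference is that you make explicit the Nakayama-type reduction (via induction on a positive linear form) identifying the minimal generating set with the non-reducible elements, which the paper uses implicitly.
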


\begin{proof}
Only the second statement may need a justification. We can of course assume that $0\notin G$. Suppose first that $z=y-x\in C$ for some $x\in G$. Then $z\in \widetilde M$ and  $y+ M\subset z+M$ so that $y$ does not belong to the minimal generating set.

Conversely, if $y-x\notin C$ for all $x\in G$, then there is no element $z\in \widetilde M$, $z\neq y$, such that $y\in Z+M$, and so $y$ belongs to the minimal generating set.	
\end{proof}

Normaliz computes minimal sets of module generators not only in the discussed homogeneous case, but also in the inhomogeneous case in which the module is the set of lattice points in a polyhedron $P$ and $G$ generates $\rec(P)$ (since version 3.1.0).

\section{Homogeneous systems of parameters}\label{hsop}

As above, we consider monoids $M=C\cap L$ where $C\subset\RR^d$ is a rational pointed cone and $L\subset \ZZ^d$ is a subgroup. We may right away assume that $d=\dim C$ and $L=\ZZ^d$. Since we want to discuss Hilbert series, we need a grading $\deg:\ZZ^d\to \ZZ$ such that $\deg(x)>0$ for $x\in M$, $x\neq 0$. Additionally we assume that $\deg$ takes the value $1$ on $\gp(M)$, a standardization that Normaliz always performs. The following classical theorem shows that the Hilbert series can be expressed as a rational function.

\begin{theorem}[Ehrhart, Stanley, Hilbert-Serre]\label{Ehr}
\leavevmode
\begin{enumerate}
\item 
The Hilbert series $H_M(t)=\sum_{x\in M} t^{\deg(x)}$ is (the power series expansion of) a rational function that can be written in the form
\begin{equation}
H_M(t)=\frac{Q(t)}{(1-t^\ell)^d}\label{FF}
\end{equation}
where $Q(t)=1+h_1t+\dots+h_st^s$ is a polynomial of degree $s<r\ell$  with nonnegative integer coefficients $h_i$, and $\ell$ is the least common multiple of the degrees of the extreme integral generators of $C$.
\item There exists a (unique) quasipolynomial $q_M(k)$ of degree $r-1$ and period dividing $\ell$ such that $\#\{x\in M: \deg(x)=k\}=q_M(k)$ for all $k>s-r\ell$.
\end{enumerate}
\end{theorem}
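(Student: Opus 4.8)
The plan is to reduce the statement to the standard commutative-algebra version for graded algebras and modules, namely the Hilbert-Serre theorem together with the theorem of Stanley/Ehrhart on Hilbert quasipolynomials. First I would pass to the monoid algebra $R = K[M]$ over a field $K$, graded by $\deg$. Since $\deg$ takes only positive values on $M\setminus\{0\}$ and takes the value $1$ on $\gp(M)$, the algebra $R$ is a positively graded, finitely generated $K$-algebra with $R_0 = K$. A homogeneous system of parameters for $R$ exists, but for part (1) the cleanest route is to produce an \emph{explicit} one with all degrees equal to $\ell$: let $w_1,\dots,w_r$ be the extreme integral generators of $C$ (with $r = \dim C = d$ after the reductions we have made, so the stated $d$ in \eqref{FF} and the $r$ in the exponent bound agree), and set $\ell = \lcm(\deg w_1,\dots,\deg w_d)$. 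Then $u_i = (\ell/\deg w_i)\, w_i \in M$ has $\deg u_i = \ell$. I would argue that $u_1,\dots,u_d$ form an hsop for $R$: they are $d = \dim R$ elements, and since $w_1,\dots,w_d$ span $C$ as a cone, every ray of $C$ lies in the cone over $\{u_1,\dots,u_d\}$, so the radical of the ideal they generate contains a power of every monomial $t^x$, $x\in M\setminus\{0\}$ — hence $R/(u_1,\dots,u_d)$ is a finite-dimensional $K$-vector space.

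Given such an hsop, $R$ is a finitely generated module over the polynomial subalgebra $S = K[u_1,\dots,u_d]$, whose Hilbert series is $1/(1-t^\ell)^d$. By the Hilbert-Serre theorem applied to the finitely generated graded $S$-module $R$, we get $H_R(t) = H_M(t) = Q(t)/(1-t^\ell)^d$ with $Q(t) \in \ZZ[t]$. Nonnegativity of the coefficients of $Q$ and the normalization $Q(0)=1$ require a little more: I would invoke that $R = K[M]$ is Cohen-Macaulay (by Hochster's theorem, since $M$ is a normal affine monoid — it equals $C\cap\ZZ^d$), so $R$ is a \emph{free} $S$-module of some finite rank; writing $R = \bigoplus_j S(-a_j)$ as graded $S$-modules with $a_0 = 0$ (the generator in degree $0$ being $1$) and $a_j \ge 0$, we obtain $Q(t) = \sum_j t^{a_j}$, which has nonnegative integer coefficients and constant term $1$. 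The degree bound $s < r\ell$ will follow from comparing with part (2): if $s = \deg Q \ge r\ell$, the expansion would force the counting function to deviate from a quasipolynomial of degree $r-1$ beyond the claimed range, or more directly, $s < r\ell$ is exactly the statement that $Q(t)/(1-t^\ell)^r$ is already in "lowest terms" enough that the associated quasipolynomial governs all degrees $k > s - r\ell$; I would get it from the a-invariant bound for the Cohen-Macaulay algebra $R$, namely $a(R) = \deg Q - r\ell < 0$ because $R$ has positive depth and $R_0 = K$ in degree $0$ forces the top of the canonical module to sit in negative degree. (Here I am using $r = d$; the paper's phrasing with a general $r$ is explained by the earlier reduction $d = \dim C$.)

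For part (2), I would expand \eqref{FF} as a power series. Writing $1/(1-t^\ell)^r = \sum_{k\ge 0}\binom{k/\ell + r - 1}{r-1}t^k$ where the binomial is interpreted as $0$ unless $\ell \mid k$, the coefficient of $t^k$ in $H_M(t)$ is $\sum_{i=0}^{s} h_i \cdot [\text{coeff of }t^{k-i}]$. Each summand, as a function of $k$ restricted to a fixed residue class mod $\ell$, is a polynomial of degree $r-1$ in $k$ (a shifted binomial coefficient), provided $k - i \ge 0$; the condition $k > s - r\ell$ is precisely what guarantees that none of the "missing" low-order terms (those with $k - i < 0$) interferes — more carefully, the truncation error from replacing the true coefficient by the polynomial formula vanishes once $k$ exceeds $s - r\ell$, because the omitted terms $\binom{(k-i)/\ell + r-1}{r-1}$ with $k-i<0$ are exactly the integer points where the polynomial itself vanishes. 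This gives a quasipolynomial $q_M$ of degree $\le r-1$ and period dividing $\ell$; that the degree is exactly $r-1$ follows since the leading coefficient is $\frac{1}{\ell^{r-1}(r-1)!}\,Q(1) = \frac{\vol(\text{cross-section})}{(r-1)!} > 0$ as $Q(1) > 0$ (the coefficients $h_i$ are nonnegative and $h_0 = 1$). Uniqueness of $q_M$ is immediate: two quasipolynomials of period dividing $\ell$ agreeing on all sufficiently large $k$ in each residue class must coincide.

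The main obstacle I anticipate is not the power-series bookkeeping but assembling the structural inputs cleanly — specifically, verifying that $u_1,\dots,u_d$ is a genuine hsop (finiteness of $R/(u_1,\dots,u_d)$ via the radical argument) and invoking normality $\Rightarrow$ Cohen-Macaulay $\Rightarrow$ freeness over $S$ to pin down the nonnegativity of the $h_i$ and the sharp degree bound $s < r\ell$. An alternative that avoids Cohen-Macaulayness for the nonnegativity is a direct combinatorial one: choose a regular triangulation of the cross-section polytope and use a shelling to get a Stanley-type decomposition of $M$ into shifted copies of $\ZZ_+^r$-orbits with shifts of controlled degree, from which $Q(t) = \sum t^{\deg u}$ with the sum over a finite set of "shifts," each of degree $< r\ell$; this is essentially the Stanley decomposition that Normaliz computes (cf.\ \eqref{StD}), and it simultaneously yields \eqref{FF}, the nonnegativity, and the bound. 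I would present the Cohen-Macaulay argument as the main line and remark on the combinatorial alternative, since the latter is closer to the algorithmic content of the paper.
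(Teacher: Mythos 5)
Your overall architecture for part (1) --- an hsop of pure degree $\ell$, Cohen--Macaulayness of $K[M]$ via Hochster's theorem, freeness over the parameter subring to get nonnegative $h_i$, then the power-series bookkeeping for part (2) --- is the ``commutative algebra access'' that the paper merely alludes to (its primary sketch is the Stanley decomposition, which you relegate to a closing remark). However, there is a genuine error at the foundation of your main line: you assert that the number of extreme integral generators of $C$ equals $r=\dim C=d$ and that the $d$ monomials $u_i=(\ell/\deg w_i)\,w_i$ already form an hsop. This holds only for simplicial cones. A pointed $d$-dimensional cone can have arbitrarily many extreme rays, and $d$ of them do not span $C$ as a cone, so the monomial ideal they generate has height strictly less than $d$; your radical argument (``every ray of $C$ lies in the cone over $\{u_1,\dots,u_d\}$'') is exactly where this breaks. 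The paper's own running example in Section~\ref{hsop}, the cone over a square, exhibits the failure: $\hht(x_1,x_2,x_3)=2<3$, and all four extreme generators are needed to reach height $3$. The repair is standard and is what Section~\ref{hsop} does: take \emph{all} $n$ extreme generators, raise each to the power $\ell/\deg w_i$ so that every resulting monomial has degree $\ell$, observe that together they generate an ideal with radical $\mathfrak{m}$, and then use prime avoidance over an infinite field to replace them by $d$ generic homogeneous linear combinations, each still of degree $\ell$. With that hsop the rest of your argument for (1) goes through.

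A secondary weak point is the degree bound $s<r\ell$: the implication ``positive depth and $R_0=K$ forces $a(R)<0$'' is not valid (there are positively graded Cohen--Macaulay domains with positive $a$-invariant). For normal monoid rings one gets $a(R)<0$ from the Danilov--Stanley description of the canonical module as the ideal of interior lattice points, all of positive degree; alternatively, the Stanley-decomposition route delivers the bound directly, since each shift $u$ lies in a fundamental parallelotope of at most $d$ extreme generators and hence the numerator of each summand of \eqref{StD}, after clearing denominators to $(1-t^\ell)^d$, has degree $<d\ell$. Your expansion argument for part (2), including the observation that the condition $k>s-r\ell$ is precisely what confines the ``missing'' terms to the zeros of the binomial polynomial, and the uniqueness statement, are correct.
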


It is not difficult to derive the first claim from the existence of a Stanley decomposition so that $H_M(t)$ is a sum of terms given by \eqref{StD}. This explains that all coefficients of the numerator polynomial are nonnegative. There is also an access via commutative algebra which we will explain below.  	

A quasipolynomial of period $\pi>0$ and degree $g$ is a function $q:\ZZ\to\CC$ that can be represented in the form$$
q(k)=q_0^{(k)}+q_1^{(k)}k+\dots+q_g^{(k)}k^g
$$
with $q_i^{(k)}=q_i^{(j)}$ for all $i$ whenever $j\equiv k\pod \pi$; moreover, one has $q_g^{(k)}\neq 0$ for at least one $k$ and $\pi$ is chosen as small as possible. The quasipolynomial in Theorem \ref{Ehr} is called the \emph{Hilbert quasipolynomial} of $M$.

We use the terms ``Hilbert series'' and ``Hilbert quasipolynomial''. One could equally well name these objects after Ehrhart. In fact, the Hilbert series of $M$ is nothing but the Ehrhart series of the polytope that one obtains by intersecting $C$ with the hyperplane of degree $1$ elements in $\RR$.

While Theorem \ref{Ehr} gives a representation of $H_M(t)$ in which all parameters have a natural combinatorial description, it is not completely satisfactory since the denominator often has a very large degree and one can do better. It is our goal to find a representation of $H_M(t)$ as a fraction whose 
\begin{enumerate}
\item denominator is of the form $(1-t^{g_1})\cdots(1-t^{g_d})$ and of small degree  $g_1+\dots+g_d$ and such that
\item the coefficients of the numerator polynomial are nonnegative integers and have a combinatorial interpretation.
\end{enumerate}
We will give an example showing that in general there is no canonical choice of the denominator. Nevertheless it makes sense to search for a good choice. Of course, if all extreme generators have degree $1$, then the denominator of \eqref{FF} is $(1-t)^d$, and there is nothing to discuss.

By default Normaliz proceeds as follows: It reduces the fraction \eqref{FF} to lowest terms and obtains a representation
$$
H_M(t)=\frac{\widetilde Q(t)}{\zeta_{q_1}^{e_1} \cdots \zeta_{q_u}^{e_u}}
$$
with cyclotomic polynomials $\zeta_k$, $1=q_1<q_2<\dots<q_u$. Then it takes $g_d$ as the lcm of all $q_i$, replaces their product by $(1-t^{g_d})$ and proceeds with then remaining cyclotomic factors etc. In this way the $g_k$ express the periods of the coefficients in the Hilbert quasipolynomial: $g_i$ is the lcm of the periods of the coefficients $q_d,\dots,q_{d-i+1}$. We will refer to the denominator of this representation as \emph{standard denominator}. This choice is easy to compute and natural in its way, but not satisfactory if one wants a combinatorial interpretation of the coefficients in the numerator, as the following example shows.

Consider the cone $C=\RR_+(1,2)+\RR_+(2,1)$ with the grading $\deg(x_1,x_2)=x_1+x_2$ (known as the \emph{total grading}). Then Hilbert series with standard denominator is:
$$
H_M(t)=\frac{1-t+t²}{(1-t)(1-t^3)},
$$
with coprime numerator and denominator, and the denominator even has the desired form $(1-t^{g_1})(1-t^{g_2})$. However, the numerator has a negative coefficient.

Commutative algebra suggests us to choose $g_1,\dots,g_d$ as the degrees of the elements in a \emph{homogeneous system of parameters} (hsop for short). Since version 3.1.2 Normaliz can compute such degrees. However, one must use this option with care since it requires the analysis of the face lattice of $C$, an impossible task if $C$ has a large number of facets.

Let $R=\bigoplus_{i=0}^{\infty}R_i$ be a finitely generated $\ZZ$-graded algebra over some infinite field $K=R_0$ of Krull dimension $\dim R=d$. Its graded maximal ideal is given by $\mathfrak{m}=\bigoplus_{i>0}R_i$. In our case, $R$ is the monoid algebra $K[M]$ which is Cohen-Macaulay by a theorem of Hochster's, since $M$ is normal, see \cite[Theorem 6.10]{BG}.

We call homogeneous elements $\theta_1,\ldots,\theta_d\in\mathfrak{m}$ a \emph{homogeneous system of parameters} if $\mathfrak{m}=\mathrm{Rad}(\theta_1,\ldots,\theta_d)$ or, equivalently, $\dim R/\theta=0$, where $\theta=(\theta_1,\ldots,\theta_d)$.

The existence of such a system is guaranteed in the $\ZZ$-graded case by the \emph{prime avoidance lemma}, see \cite[Lemma 6.2]{BG}:
\begin{lemma}
Let $R$ be a $\ZZ$-graded ring and $I\subset R$ an ideal generated in positive degree. Let $\mathfrak{p}_1,\ldots,\mathfrak{p}_r$ be prime ideals such that $I\not\subset\mathfrak{p}_i\,$ for $i=1,\ldots,r$. Then there exists a homogeneous element $x\in I$ with $x\notin\mathfrak{p}_1\cup\dots\cup\mathfrak{p}_r$.
\end{lemma}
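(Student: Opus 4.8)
The plan is to prove, by induction on $r$, the slightly stronger statement that $I$ contains a homogeneous element \emph{of positive degree} lying outside $\mathfrak{p}_1\cup\dots\cup\mathfrak{p}_r$; the lemma is then immediate. Carrying the positive-degree condition through the induction is not a mere convenience: in a ring that is only $\ZZ$-graded a homogeneous element of $I$ may have non-positive degree, and it is exactly the hypothesis that $I$ be generated in positive degree that keeps us in positive degrees, which I will need in order to pass to a common degree in the inductive step. The base case $r=1$ is clear: writing $I=(f_1,\dots,f_k)$ with the $f_j$ homogeneous of positive degree, one cannot have all $f_j\in\mathfrak{p}_1$ (that would force $I\subseteq\mathfrak{p}_1$), so some $f_j$ already works.

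For the inductive step ($r\ge 2$) I would first invoke the inductive hypothesis for each subfamily $\{\mathfrak{p}_j:j\ne i\}$ --- legitimate since $I\not\subseteq\mathfrak{p}_j$ for every $j$ --- obtaining homogeneous $y_i\in I$ with $n_i:=\deg y_i>0$ and $y_i\notin\mathfrak{p}_j$ for all $j\ne i$. If one of the $y_i$ also avoids $\mathfrak{p}_i$, we are done; otherwise $y_i\in\mathfrak{p}_i$ for all $i$, and the idea is to homogenize the classical prime-avoidance element $y_1+y_2y_3\cdots y_r$. Concretely, with $L=\lcm(n_1,\dots,n_r)$ put
$$
t_i=\prod_{j\ne i}y_j^{\,L/n_j}\quad(i=1,\dots,r),\qquad x=t_1+\dots+t_r .
$$
Each $t_i$ is homogeneous of degree $(r-1)L$, so $x\in I$ is homogeneous of positive degree $(r-1)L$. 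Fixing $i$: every factor of $t_i$ is some $y_j$ with $j\ne i$, hence outside the prime $\mathfrak{p}_i$, so $t_i\notin\mathfrak{p}_i$; whereas for $k\ne i$ the product $t_k$ contains the factor $y_i^{\,L/n_i}$ with $y_i\in\mathfrak{p}_i$, so $t_k\in\mathfrak{p}_i$. Hence $x-t_i\in\mathfrak{p}_i$ while $t_i\notin\mathfrak{p}_i$, so $x\notin\mathfrak{p}_i$; since $i$ was arbitrary, $x\notin\mathfrak{p}_1\cup\dots\cup\mathfrak{p}_r$, completing the induction.

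The one point I expect to need real care is the degree bookkeeping in the inductive step: a naive transcription of the ungraded proof produces the inhomogeneous element $y_1+y_2y_3\cdots y_r$, and making it homogeneous forces one both to secure positive degrees for the $y_i$ --- which is exactly where the hypothesis that $I$ is generated in positive degree is used --- and to rescale by the exponents $L/n_j$ so that all summands sit in one degree. Once that is arranged, the remaining manipulation is the standard prime-avoidance argument and should be routine. (One may additionally discard at the outset any $\mathfrak{p}_i$ contained in another $\mathfrak{p}_j$, which does not change the union or invalidate the hypotheses, but this is not needed for the argument above.)
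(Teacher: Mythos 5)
Your proof is correct: the strengthened induction (keeping the avoiding element homogeneous of positive degree) and the rescaled element $x=\sum_i\prod_{j\ne i}y_j^{L/n_j}$ handle the degree bookkeeping properly, and the base case works even if $I$ is not finitely generated, since it only needs \emph{some} homogeneous positive-degree generator outside $\mathfrak{p}_1$. The paper itself gives no proof, quoting the lemma from \cite[Lemma 6.2]{BG}; your argument is the standard graded prime-avoidance induction found there, so there is nothing further to compare.
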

 For any ideal $I$ in $R$ generated in positive degree of height $\hht(I)=h$, the lemma provides the existence of elements $\theta_1,\ldots,\theta_h$ such that $\hht(\theta_1,\ldots,\theta_i)=i$ for all $i=1,\ldots,h$.

If $\theta_1,\ldots,\theta_d$ is an hsop for $K[M]$, the Hilbert series can be written in the form
$$
H_M(t)=\frac{h_0+h_1t+\ldots+h_m t^m}{(1-t^{g_1})\cdots(1-t^{g_d})},
$$
where $g_j=\deg\theta_j$. Furthermore $h_i$ counts the number of elements of degree $i$ in a homogeneous basis of $K[M]$ over $K[\theta_1,\ldots,\theta_d]$ and in particular $h_i$ is non-negative (see \cite[Theorem 6.40]{BG}). 

To reach our mentioned goal of finding a nice representation of the Hilbert series, we therefore compute (the degrees of) an hsop for the monoid algebra $K[M]$.

Our main idea for the construction of an hsop is generating elements $\theta_i$ with $\hht(\theta_1,\ldots,\allowbreak \theta_i)=i$ from the extreme integral generators of the cone $C$. We denote them by $x_1,\ldots,x_n\in\ZZ^n$ and note that $\hht(x_1,\ldots,x_n)=d$, where $x_1,\ldots,x_n$ are seen as monomials in $K[M]$. This claim will be justified below.

We successively insert the monomials $x_j$ into a monomial ideal and compute its height. Note that in each step the height of this ideal can only increase by at most one via Krull's principal ideal theorem, see \cite[Theorem A.1]{BH}. If 
$$
\hht(x_1,\ldots,x_j)=i>i-1=\hht(x_1,\ldots,x_{j-1}),
$$
we let 
$$
\theta_i:=\lambda_1x_1^{a_1}+\ldots+\lambda_jx_j^{a_j},
$$
where $\lambda_k\in K$ are generic coefficients and the exponents $a_k$ are chosen in such a way that $\theta_i$ is homogeneous of degree $\lcm(\deg(x_1),\ldots,\deg(x_j))$. We point out that the height does not change if we replace the $x_i$ by powers of them. Furthermore, all current monomials $x_1,\ldots,x_j$ are needed in general to ensure that $\hht(\theta_1,\ldots,\theta_i)=i$.

We are left with the task to compute $\hht(x_1,\ldots,x_j)$. The minimal prime ideals of a monomial ideal $I$ in the monoid algebra $K[M]$ are of the form $\mathfrak{p}_F=K\{M\setminus F\}$, where $F$ runs through all faces of $C$ which are maximal with respect to disjointness to $I$. Furthermore the height of a prime ideal is given by the codimension of its respective face, i.e. $\hht(\mathfrak{p}_F)=d-\dim(F)$ (see for instance \cite[Corollary~4.35 and Proposition~4.36]{BG}). (In particular, the ideal generated by the monomials $x_1,\dots,x_n$ has height $d$: the only face disjoint to them is $\{0\}$.) In conclusion
$$
\hht(x_1,\ldots,x_j)=\min_{F\text{ face}}\left\{\codim(F);F\cap (x_1,\ldots,x_j)=\emptyset\right\}.
$$

These considerations lead to a step-by-step algorithm to compute the \emph{heights vector} $h\in\ZZ_+^n$ with $h_j=\hht(x_1,\ldots,x_j)$, see Algorithm~\ref{alg:heights}.

\begin{algorithm}
	     \caption{Heights}
	     \label{alg:heights}
	    \begin{algorithmic}[1]
		\Let{$h_0$}{$1$}
		\Let{$\mathcal{G}$}{facets of $C$}
		\Let{$m$}{$d$}
		\For{$j=1,\ldots,n$}
		\Let {$\mathcal{G}_1$}{$\{G_k\in\mathcal{G};x_j\notin G_k \}$}
		\Let {$\mathcal{G}_2$}{$\{G_k\in\mathcal{G};x_j\in G_k \}$}
		\If{$\mathcal{G}_1\neq\emptyset$}{\If{$\max_{G_k\in\mathcal{G}_1}\{\dim(G_k)\}<m$} $m\gets m-1;\; h_j=h_{j-1}+1$ \Else { $h_j=h_{j-1}$} \EndIf} \Else{ $h_j=h_{j-1}+1$} \EndIf
		\ForAll{facets $F_{\ell}$ with $x_j\notin F_{\ell}$}
			\ForAll{$G_k\in\mathcal{G}_2$}  \Let{$G_{k,\ell}$}{$G_k\cap F_{\ell}$} \EndFor
		\EndFor
		\Let{$\mathcal{G}$}{$\mathcal{G}_1\cup \{\text{maximal faces from } G_{k,\ell}\}$} 
		\EndFor
	          
	    \end{algorithmic}
\end{algorithm}

Some of the facets can be neglected in the process of taking intersections with the faces in step $j$ due to the following criteria:
\begin{enumerate}
\item The facet contains the current generator $x_j$;
\item The facet only involves generators appearing in faces in $\mathcal{G}_1$ or $x_1,\ldots,x_{j-1}$;
\item Facets only involving the generators $x_1,\ldots,x_j$ can be ignored for all following iterations.
\end{enumerate}

Once the heights vector $h$ is computed, the degrees of the corresponding hsop can be determined as mentioned before, although not all initial generators need to appear in the $\lcm$ to compute the homogeneous degree. More precisely, let $\ell$ denote the smallest index such that $h_\ell=h_{\ell+1}$. Since $\hht(x_1,\ldots,x_j,x_{j+1})=h_{j+1}=h_{j}+1=\hht(x_1,\ldots,x_j)+1$ for $j=1,\ldots,\ell-1$ we have
$$
\deg(\theta_i)=\begin{cases}  \deg(x_i), & \text{if }i\leq \ell   \\ \mathrm{lcm}(\deg(x_{\ell+1}),\ldots,\deg(x_i)), & \text{if }i>\ell.   \end{cases}
$$

We finally calculate the numerator of the new representation of the Hilbert series, by multiplying the form with cyclotomic polynomials in the denominator with the product $(1-t^{g_1})\dots(1-t^{g_d}),$ where $g_j=\deg(\theta_j)$.

We note that for the simplicial case the extreme integral generators $x_1,\ldots,x_d$ already form an hsop. Therefore the choice of their degrees in the denominator of the Hilbert series can be considered a canonical. In the above simplicial example $C=\RR_+x_1+\RR_+x_2$ with $x_1=(1,2)$ and $x_2=(2,1)$ the series can be expressed as:
$$
H_M(t)=\frac{1+t^2+t^4}{(1-t^3)^2},
$$
where the degrees appearing in the denominator come from the extreme integral generators of $C$. The numerator has non-negative coefficients and counts the number of homogeneous basis elements of $K[M]$ as a $K[x_1,x_2]$-module per degree, in this case $(0,0), (1,1)$ and $(2,2)$ of degree $0, 2$ and $4$ respectively. This example also shows that using the Hilbert basis instead of the extreme integral generators as a generating system for $M$ sometimes yield smaller exponents in the denominator, namely $(1-t^2)(1-t^3)$. However, using the Hilbert basis for the algorithm increases the complexity of taking intersections remarkably, which is the most expensive step. 

As an example, let $C=Q\times\{1\}$ be the cone over a square $Q$, see Figure~\ref{fig:square}. The degree is given by $\deg(x_i)=i$ for $i=1,\ldots,4$. (This choice is eligible since the only condition for this configuration is that the two sums of the degrees of antipodal points agree.) We get the following sequence of heights, which is also illustrated in Figure~\ref{fig:square} where dotted lines indicate the maximal disjoint faces:
\begin{multline*}
h_1=\hht(x_1)=1,\; h_2=\hht(x_1,x_2)=1,\; h_3=\hht(x_1,x_2,x_3)=2,\;\\ h_4=\hht(x_1,x_2,x_3,x_4)=3.
\end{multline*}

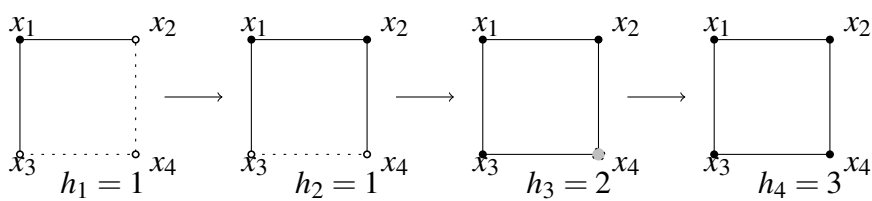
\begin{figure}[h]
\tikzstyle{ipe stylesheet} = [
	x=0.8bp, y=0.8bp,
	ipe fdisk/.pic={
	    \filldraw[line width=0.2*3]
	      (0,0) circle[radius=0.5*3];
	    \coordinate () at (0,0);
	  },
	ipe dash dotted/.style={dash pattern=on 1bp off 3bp}
]
\begin{tikzpicture}[scale=0.95,ipe stylesheet]
  \draw[shift={(257.73, 760.299)}, scale=0.8869]
    (0, 0)
     -- (64, 0);
  \node[anchor=base west]
     at (45.089, 741.204) {$h_1=1$};
  \node[anchor=base west]
     at (160.231, 741.204) {$h_2=1$};
  \draw[shift={(101.643, 788.678)}, scale=0.8869, ->]
    (0, 0)
     -- (32, 0);
  \node[anchor=base west]
     at (273.749, 741.204) {$h_3=2$};
  \draw[shift={(215.161, 788.678)}, scale=0.8869, ->]
    (0, 0)
     -- (32, 0);
  \draw[shift={(314.489, 817.058)}, scale=0.8869]
    (0, 0)
     -- (0, -64);
  \draw[shift={(257.73, 760.299)}, scale=0.8869]
    (0, 0)
     -- (0, 64)
     -- (64, 64);
  \pic[fill=black]
     at (257.7305, 817.0576) {ipe fdisk};
  \node[anchor=base west]
     at (247.499, 821.387) {$x_1$};
  \node[anchor=base west]
     at (316.238, 821.387) {$x_2$};
  \node[anchor=base west]
     at (247.498, 752.302) {$x_3$};
  \node[anchor=base west]
     at (316.238, 752.302) {$x_4$};
  \pic[fill=black]
     at (314.4894, 817.0576) {ipe fdisk};
  \pic[fill=black]
     at (257.7305, 760.2987) {ipe fdisk};
  \filldraw[ipe dash dotted, fill=lightgray]
    (314.4894, 760.2987) circle[radius=3.0402];
  \draw[shift={(144.213, 760.299)}, scale=0.8869, ipe dash dotted]
    (0, 0)
     -- (64, 0);
  \draw[shift={(200.972, 817.058)}, scale=0.8869]
    (0, 0)
     -- (0, -64);
  \draw[shift={(144.213, 760.299)}, scale=0.8869]
    (0, 0)
     -- (0, 64)
     -- (64, 64);
  \pic[fill=black]
     at (144.2126, 817.0576) {ipe fdisk};
  \pic[fill=white]
     at (144.2126, 760.2987) {ipe fdisk};
  \pic[fill=white]
     at (200.9715, 760.2987) {ipe fdisk};
  \node[anchor=base west]
     at (133.981, 821.387) {$x_1$};
  \node[anchor=base west]
     at (202.72, 821.387) {$x_2$};
  \node[anchor=base west]
     at (133.981, 752.302) {$x_3$};
  \node[anchor=base west]
     at (202.72, 752.302) {$x_4$};
  \pic[fill=black]
     at (200.9715, 817.0576) {ipe fdisk};
  \draw[shift={(30.695, 760.299)}, scale=0.8869]
    (0, 0)
     -- (0, 64)
     -- (64, 64);
  \draw[shift={(30.695, 760.299)}, scale=0.8869, ipe dash dotted]
    (0, 0)
     -- (64, 0)
     -- (64, 64);
  \pic[fill=black]
     at (30.6947, 817.0576) {ipe fdisk};
  \pic[fill=white]
     at (87.4536, 817.0576) {ipe fdisk};
  \pic[fill=white]
     at (30.6947, 760.2987) {ipe fdisk};
  \pic[fill=white]
     at (87.4536, 760.2987) {ipe fdisk};
  \node[anchor=base west]
     at (20.463, 821.387) {$x_1$};
  \node[anchor=base west]
     at (89.202, 821.387) {$x_2$};
  \node[anchor=base west]
     at (20.463, 752.302) {$x_3$};
  \node[anchor=base west]
     at (89.203, 752.302) {$x_4$};
  \draw[shift={(371.248, 760.299)}, scale=0.8869]
    (0, 0)
     -- (64, 0);
  \node[anchor=base west]
     at (387.267, 741.204) {$h_4=3$};
  \draw[shift={(328.679, 788.678)}, scale=0.8869, ->]
    (0, 0)
     -- (32, 0);
  \draw[shift={(428.007, 817.058)}, scale=0.8869]
    (0, 0)
     -- (0, -64);
  \draw[shift={(371.248, 760.299)}, scale=0.8869]
    (0, 0)
     -- (0, 64)
     -- (64, 64);
  \pic[fill=black]
     at (371.2484, 817.0576) {ipe fdisk};
  \node[anchor=base west]
     at (361.017, 821.387) {$x_1$};
  \node[anchor=base west]
     at (429.756, 821.387) {$x_2$};
  \node[anchor=base west]
     at (361.016, 752.302) {$x_3$};
  \node[anchor=base west]
     at (429.756, 752.302) {$x_4$};
  \pic[fill=black]
     at (428.0074, 817.0576) {ipe fdisk};
  \pic[fill=black]
     at (371.2484, 760.2987) {ipe fdisk};
  \pic[fill=black]
     at (428.0074, 760.2987) {ipe fdisk};
\end{tikzpicture}
\caption{Sequence of heights for a cone over a square}
\label{fig:square}
\end{figure}

The degrees for the corresponding hsop are given by $\deg(\theta_1)=1, \deg(\theta_2)=6$ and $\deg(\theta_3)=12$ and the Hilbert series has the form
$$
H_M(t)=\frac{1+t^2+t^3+2t^4+2t^6+t^7+2t^8+2t^{10}+t^{11}+t^{12}+t^{14}}{(1-t)(1-t^6)(1-t^{12})}.
$$

The heights vector and the degrees of the corresponding hsop can also be seen on the terminal if Normaliz is run with the verbosity option:
\begin{verbatim}
Heights vector: 1 1 2 3

Degrees of HSOP: 1 6 12 
\end{verbatim}

The Hilbert series with standard denominator for this cone is
$$
H_M(t)=\frac{1+t^3+t^4-t^5+t^6+t^7+t^{10}}{(1-t)(1-t^2)(1-t^{12})},
$$
which again has a negative coefficients in the numerator.

If the order of the generators would be $x_2,x_3,x_1,x_4$ the degrees and hence the exponents in the denominator of the Hilbert series are smaller, namely $\deg(\theta_1)=2, \deg(\theta_2)=3, \deg(\theta_3)=4$ and
$$
H_M(t)=\frac{1+t+t^2+t^3+t^4}{(1-t^2)(1-t^3)(1-t^4)}.
$$

However, considerations about the best possible order of generators would involve knowledge about the algebraic structure and defining equations (in this case $x_1x_4=x_2x_3$) of the input data, which are not accessible in Normaliz. Moreover, there is no clear answer to the question what an optimal choice for the exponents in the denominator should look like. Nevertheless, a possibility to improve the current representation would be a dynamic choice of the generators, where the next generator is chosen to lie in as many faces as possible, e.g. $x_1,x_4,x_2,x_3$ in the above example. Future versions of Normaliz may contain this choice.
%
%
%
%
%
%
%

\section{Class group}\label{class}

The monoids $M=C\cap L$ where $C\subset \RR^d$ is a rational cone and $L$ a subgroup of $\ZZ^d$ are exactly the normal affine monoids. For such a monoid $M$ and a field $K$ the monoid algebra $K[M]$ is a normal Noetherian domain, which has a divisor class group $\Cl(K[M])$, the group of isomorphism classes of divisorial ideals. It is not hard to prove that every isomorphism class is represented by a monomial divisorial ideal, and if one analyzes which monomial ideals are divisorial and when two such ideals are isomorphic modules, then one obtains \emph{Chouinard's theorem}, see \cite[Corollary 4.56]{BG}: 
\begin{theorem}
Let $\sigma:\gp(M)\to \ZZ^s$ be the standard map. Then the divisor class group $\Cl(K[M])$ (identical to the divisor class group $\Cl(M)$ of $M$) is given by $\ZZ^s/\sigma(\gp(M))$.
\end{theorem}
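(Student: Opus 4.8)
The plan is to read the divisor class group of $K[M]$ off its height-one monomial primes, using Nagata's theorem on localisations of Krull domains to control the ``global'' relations and a facet valuation computation to pin down the ``monomial'' ones. Throughout I keep the standing reduction that $L=\gp(M)=\ZZ^d$ and $\dim C=d$, so that $\sigma_1,\dots,\sigma_s$ are the coprime-coefficient support forms of Theorem~\ref{MW} and each $\sigma_i\colon\ZZ^d\to\ZZ$ is surjective. By Hochster's theorem (quoted in Section~\ref{hsop}) $K[M]$ is a normal Noetherian domain, hence a Krull domain, so $\Cl(K[M])$ is $\mathrm{Div}(K[M])$, the free abelian group on the height-one primes, modulo the principal divisors.

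First I would locate the relevant primes. For each $i$ the set $F_i=\{x\in M:\sigma_i(x)=0\}$ is a facet of $C$, and $\mathfrak p_i=K\{M\setminus F_i\}=K\{x\in M:\sigma_i(x)>0\}$ is a monomial prime of $K[M]$ of height $\codim F_i=1$, by the face/codimension dictionary recalled in Section~\ref{hsop}. The same dictionary shows that $\mathfrak p_1,\dots,\mathfrak p_s$ are exactly the height-one primes containing a monomial, equivalently those meeting the multiplicative set $S$ generated by all $X^a$, $a\in M$. Since $S^{-1}K[M]=K[\gp(M)]=K[\ZZ^d]$ is a Laurent polynomial ring, hence a UFD with trivial class group, Nagata's theorem yields that $\Cl(K[M])$ is generated by the classes $[\mathfrak p_1],\dots,[\mathfrak p_s]$; that is, $e_i\mapsto[\mathfrak p_i]$ defines a surjection $\ZZ^s\to\Cl(K[M])$.

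Next I would compute its kernel, i.e.\ decide when $\sum_i n_i\mathfrak p_i$ is principal. The heart of this is the identity $v_{\mathfrak p_i}(X^a)=\sigma_i(a)$ for $a\in\gp(M)$, where $v_{\mathfrak p_i}$ is the normalised valuation of the DVR $K[M]_{\mathfrak p_i}$: the homomorphism $a\mapsto v_{\mathfrak p_i}(X^a)$ vanishes on $\Ker\sigma_i\cap\gp(M)$ (monomials supported on $F_i$ are units of $K[M]_{\mathfrak p_i}$, and the lattice points of the full-dimensional rational cone $F_i$ generate that hyperplane lattice), it is positive exactly where $\sigma_i$ is, and it is normalised because the lowest-$\sigma_i$-degree part of a nonzero $f\in K[M]$ is a nonzero element of the domain $K[M]/\mathfrak p_i\cong K[F_i\cap\gp(M)]$ and so does not cancel, forcing $v_{\mathfrak p_i}(\mathrm{Frac}(K[M])^{\times})=\ZZ$. (Alternatively one may quote the facet valuations of $K[M]$ computed in \cite[4.F]{BG}.) Hence $\mathrm{div}(X^a)=\sum_i\sigma_i(a)\,\mathfrak p_i$, which is $\sigma(a)$ under $\bigoplus_i\ZZ\mathfrak p_i\cong\ZZ^s$. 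Conversely, if $\sum_i n_i\mathfrak p_i=\mathrm{div}(f)$ for some nonzero $f$ in the quotient field, then $f$ has trivial divisor in the UFD $S^{-1}K[M]$ (all other height-one primes survive the localisation), so $f=c\,X^a$ with $c\in K^{\times}$ and $a\in\gp(M)$, whence $(n_i)=\sigma(a)$. Therefore the kernel is precisely $\sigma(\gp(M))$ and $\Cl(K[M])\cong\ZZ^s/\sigma(\gp(M))$. Running the identical bookkeeping with divisorial ideals of the monoid $M$ (height-one primes again $\leftrightarrow$ facets, principal divisors again $\leftrightarrow\sigma(\gp(M))$), or citing the general comparison $\Cl(K[M])=\Cl(M)$ for normal affine monoids, gives the asserted identification of the two class groups; and if $C$ is nonpointed one first splits off the unit group via $M\cong U(M)\oplus\sigma(M)$, so that $K[M]$ is a Laurent extension of $K[\sigma(M)]$ with the same class group.

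I expect the only genuinely delicate point to be the normalisation $v_{\mathfrak p_i}(X^a)=\sigma_i(a)$ --- ruling out an extra positive integer factor on the right --- for which the cleanest route is the leading-form non-cancellation argument in $K[F_i\cap\gp(M)]$ sketched above, or a direct appeal to \cite[4.F]{BG}. The remaining ingredients are either purely combinatorial (the monomial-prime/face dictionary already used in Section~\ref{hsop}) or standard citations (Nagata's theorem, and Hochster's normality result).
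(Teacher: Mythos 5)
Your proof is correct. Note, though, that the paper does not actually prove this statement: it is Chouinard's theorem, quoted from \cite[Corollary 4.56]{BG}, with only a one-sentence indication of the route taken there (show every divisor class is represented by a monomial divisorial ideal, then classify which monomial ideals are divisorial and when two are isomorphic). Your argument --- Nagata's theorem applied to the localization $S^{-1}K[M]=K[\ZZ^d]$ to see that the facet primes $\mathfrak p_1,\dots,\mathfrak p_s$ generate $\Cl(K[M])$, followed by the computation $\mathrm{div}(X^a)=\sum_i\sigma_i(a)\,\mathfrak p_i$ to identify the relations with $\sigma(\gp(M))$ --- is the standard textbook proof and is in substance the same computation as the one sketched in the paper, just organized through valuations rather than through divisorial ideals directly. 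You correctly isolate the one delicate point, namely that $v_{\mathfrak p_i}(X^a)=\sigma_i(a)$ with no extra integer factor; your justification (the homomorphism $a\mapsto v_{\mathfrak p_i}(X^a)$ kills $\Ker\sigma_i\cap\ZZ^d$ because the lattice points of the full-dimensional facet generate that sublattice, hence it is a positive multiple $c_i\sigma_i$, and $c_i=1$ because the value of any $f\in K[M]$ is the minimum of the values on its support, by the leading-form argument in the domain $K[F_i\cap\gp(M)]$, so the value group of monomials already exhausts $\ZZ$) is sound, and it is exactly where the coprimality normalization of the support forms in Theorem~\ref{MW} gets used. The reduction of the nonpointed case via $M\cong U(M)\oplus\sigma(M)$, making $K[M]$ a Laurent polynomial extension with the same class group, is also the right way to cover the full generality of the statement.
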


If $\dim C=d$ and $L=\ZZ^d$, one has $\gp(M)=\ZZ^d$. Therefore $\Cl(M)=\ZZ^s/\sigma(\ZZ^d)$. Since $\sigma$ is known, the computation of the divisor class group is a cheap by-product. Let $A$ be the matrix whose columns are the support forms with coordinates in the dual basis to the unit vectors in $\ZZ^d$. Then the rows generate $\sigma(\ZZ^d)\subset \ZZ^s$, and it is only a matter of computing the Smith normal form of $A$. It immediately yields a decomposition $\Cl(M)=\ZZ^r\oplus (\ZZ/c_1\ZZ)^{e_1}\oplus\dots\oplus (\ZZ/c_u\ZZ)^{e_u}$ such that $c_1\mid \cdots \mid c_u$.

\end{document}